\newtheorem{Thm}{Theorem}[section]
\newtheorem{Lem}[Thm]{Lemma}
\theoremstyle{definition}
\newtheorem{Def}[Thm]{Definition}
\newtheorem{Asm}[Thm]{Assumption}
\DeclareMathOperator{\dom}{dom}
\DeclareMathOperator{\pos}{pos}
\DeclareMathOperator{\stem}{stem}
\newcommand{\fmax}{f_\text{max}}  
\newcommand{\gmin}{g_\text{min}}  
\newcommand{\fnto}{\ensuremath{\rightarrow}}  
\newcommand{\al}[1]{\ensuremath{{\aleph_{#1}}} }          
\newcommand{\om}[1]{\ensuremath{{\omega_{#1}}} }          
\newcommand{\esm}{\ensuremath{\prec}}  
\newcommand{\std}[1]{\ensuremath{\check{#1}}}              
\newcommand{\incomp}{\ensuremath{\perp}}
\newcommand{\comp}{\ensuremath{\parallel}}
\DeclareMathOperator{\trunk}{trunk}
\newcommand{\myc}{c^{\exists}}
\newcommand{\mycfa}{c^{\forall}}
\begin{document}
\subjclass[2000]{03E17;03E40}
\date{\today}

\title{Even more simple cardinal invariants}
\author[Jakob Kellner]{Jakob Kellner}
\address{Kurt G\"odel Research Center for Mathematical Logic\\
 Universit\"at Wien\\
 W\"ahringer Stra\ss e 25\\
 1090 Wien, Austria}
\curraddr{Einstein Institute of Mathematics\\
  Edmond J. Safra Campus, Givat Ram\\
  The Hebrew University of Jerusalem\\
  Jerusalem, 91904, Israel}
\email{kellner@fsmat.at}
\urladdr{http://www.logic.univie.ac.at/$\sim$kellner}
\thanks{Supported by a European Union Marie Curie EIF fellowship, contract MEIF-CT-2006-024483.}


\begin{abstract}
  Using GCH, we force the following: There are continuum many 
  simple cardinal characteristics with pairwise different values.
\end{abstract}
\maketitle

\section{Introduction}
The union of countably many Lebesgue nullsets is again a nullset. On the other
hand, there are $2^\al0$ many nullsets with non-null union.  If we assume
$\lnot$CH, i.e.\ $2^\al0>\al1$, then it is interesting to ask about the minimal
size of a family of nullsets with non-null union.  This is a cardinal number
between (including) $\al1$ and $2^\al0$.  Such cardinal numbers (or their
definitions) are called cardinal characteristics.

There are numerous examples of such characteristics using notions from
measure theory, topology or combinatorics.  If $a$ and $b$ are such
characteristics, on can learn something about the underlying
notions by either proving dependencies (e.g. $a\leq b$) in ZFC, or by showing
that $a$ and $b$ are independent (usually by finding forcing notions $P$ and
$Q$ such that $P$ forces $a<b$ and $Q$ forces $b<a$, or by using MA).

Blass~\cite{MR1234278} introduced a classification of cardinal
characteristics, and in particular defined $\Pi^0_1$ characteristics.
Goldstern and Shelah~\cite{MR1201650} showed that there are many
$\Pi^0_1$ characteristics. In particular:
\begin{quote}
  Assume CH.\@ Assume that $\kappa_\epsilon^\al0=\kappa_\epsilon$ for all
  $\epsilon\in \om1$ and that the
  functions $f_\epsilon,g_\epsilon:\omega\to\omega$ ($\epsilon\in \om1$)
  are sufficiently different. Then there is a partial order $P$
  preserving cardinals which forces that
  $\mycfa(f_\epsilon,g_\epsilon)=\kappa_\epsilon$  
  for all $\epsilon\in\om1$.
\end{quote}
(The $\Pi^0_1$ cardinal characteristics $\mycfa(f,g)$ are defined in
\ref{def:cardchar}.)

If the $\kappa_\epsilon$ are pairwise different, 
then in the forcing extension the size of the continuum is at least
$\aleph_{\om1}$. So $\al1$, the number of different characteristics
in the forcing extension, is smaller than the continuum. 

In this paper, we assume GCH in the ground model and 
modify the construction to get a universe satisfying:
\begin{quote}
  There are continuum many pairwise different cardinal characteristics of
  the form $\mycfa(f_\epsilon,g_\epsilon)$.
\end{quote}

We give a relatively simple proof for this result. A slightly stronger result
was promised in~\cite{MR1201650} to appear in a paper called 448a, which never
materialized: a ``perfect set'' of pairwise different characteristics. Shelah
and the author are working on new creature forcing iteration techniques.  One
of the applications will hopefully be a proof of the perfect set result, as
well as similar results for the dual notions $\myc$ (which require lim-inf
constructions, cf.~\cite{KrSh:872}). All these constructions are considerably
more difficult than the ones in this paper.

\section{The theorem and the forcing}

\begin{Def}\label{def:cardchar}
  Let $f,g:\omega\to \omega\setminus 1$ be such that $f(n)>g(n)$
  for all $n$.
  \begin{itemize}
    \item $B:\omega\fnto \mathfrak{P}(\omega)$ 
      is an $(f,g)$-slalom if $B(n)\subseteq f(n)$
      and $|B(n)|<g(n)$ for all $n\in\omega$.
    \item A family $\mathfrak{B}$ of $(f,g)$-slaloms $\forall$-covers,
      if for all $\nu\in \prod_{n\in\omega}f(n)$ there is a
      $B\in \mathfrak{B}$ such that $\nu(n)\in B(n)$ for all $n\in\omega$.
    \item $\mycfa(f,g)$ is the minimal size of a $\forall$-covering
      family of $(f,g)$-slaloms.
  \end{itemize}
\end{Def}
See~\cite{MR1201650} for more about $\mycfa(f,g)$.
We are going to prove the following:
\begin{Thm}
  Assume that CH holds, that 
  $\mu=\mu^{\al0}$, and for
  $\epsilon\in \mu$, 
  $\kappa_\epsilon<\mu$ is a cardinal such
  that $\kappa_\epsilon^{\al0}=\kappa_\epsilon$.
  Then there is a forcing notion $P$ 
  and there are $P$-names $f_\epsilon,g_\epsilon$ 
  such that $P$ preserves
  cardinals and forces the following:
  $2^{\al0}=\mu$, and $\mycfa(f_\epsilon,g_\epsilon)
  =\kappa_\epsilon$ for all
  $\epsilon\in \mu$.
\end{Thm}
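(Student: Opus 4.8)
The plan is to realise $P$ as a creature forcing which behaves like a countable-support product over an index set $I$ of size $\mu$. The index set carries a ``master'' part from which the $P$-names $f_\epsilon,g_\epsilon$ are read off, and, for each $\epsilon<\mu$, two families of $\kappa_\epsilon$ further coordinates: localization-type coordinates $(\epsilon,i)^{\mathrm{sl}}$, from whose generic one reads a generic $(f_\epsilon,g_\epsilon)$-slalom $S_{\epsilon,i}$ forced to catch, for all but finitely many $n$, every $(f_\epsilon,g_\epsilon)$-real of the corresponding ground model; and lim-sup creature coordinates $(\epsilon,i)^{\mathrm{re}}$, from whose generic one reads a real $\eta_{\epsilon,i}\in\prod_{n}f_\epsilon(n)$ forced to escape every $(f_\epsilon,g_\epsilon)$-slalom of that ground model. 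These two demands never conflict, since $S_{\epsilon,i}$ is only required to catch \emph{old} reals while $\eta_{\epsilon,i}$ is a \emph{new} one. The only constraints on $f_\epsilon,g_\epsilon$ are that $g_\epsilon\to\infty$ (slowly) and $f_\epsilon$ grows fast enough for the creature machinery, together with one crucial extra demand: the $f_\epsilon$ ($\epsilon<\mu$) are forced to be pairwise ``far apart'' (e.g.\ pairwise $\le^*$-incomparable). This forces them out of $V$, which is unavoidable since $\cc^V=\al1<\mu$; and, more importantly, it ensures that a covering family for one pair cannot be converted into a covering family for another pair, which lives on the different space $\prod_n f_{\epsilon'}(n)$ --- without this the distinct values $\kappa_\epsilon$ would collapse to a single one.

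The technical core --- and the step I expect to be the main obstacle --- is to equip the creating pairs with \emph{bigness} and \emph{halving} so that a single fusion argument yields all of: $P$ is proper and $\omega^\omega$-bounding; every real of $V[G]$ already lies in $V[G\restriction C]$ for some countable $C\subseteq I$; and the coordinates commute, i.e.\ for a countable ``closed'' $J\subseteq I$ (one containing the master part relevant to its members) $P$ factors as $P\restriction J$ followed by a quotient whose generic over $V[G\restriction J]$ is (forcing-equivalent to) the product over $I\setminus J$ of the coordinate forcings, each computed over $V[G\restriction J]$. The bookkeeping has to be arranged so that \emph{one} fusion sequence handles all $\mu$ coordinates simultaneously: it touches only finitely many coordinates at each step, yet meets, at every coordinate, the countably many dense sets required for properness, for $\omega^\omega$-boundedness, and for the ``$S$ catches old reals'' and ``$\eta$ escapes old slaloms'' requirements. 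Everything past this point is either bookkeeping or soft.

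Cardinal preservation and $2^{\al0}=\mu$ are then routine. Each coordinate forcing has size $\le\al1$ (a finite-condition localization forcing, resp.\ a creature forcing on finitary data, interpreted under CH), a condition of $P$ is essentially a countable partial function on $I$ with such values, so $\card P\le\mu^{\al0}\cdot(\al1)^{\al0}=\mu$ using CH and $\mu=\mu^{\al0}$; in particular $P$ is $\mu^+$-c.c. Under CH a $\Delta$-system argument on the countable supports gives the $\al2$-c.c.: among $\al2$ conditions, $\al2$ have supports forming a $\Delta$-system with countable root $r$, and there are only $(\al1)^{\al0}=\al1$ possibilities for the restriction to $r$, so two of them agree on $r$ and are compatible. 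Properness together with the $\al2$-c.c.\ preserves all cofinalities, hence all cardinals. Finally $2^{\al0}\le\card P\le\mu$ in $V[G]$, while $2^{\al0}\ge\mu$ because the $\mu$-many reals $f_\epsilon$ are pairwise distinct; so $2^{\al0}=\mu$.

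Now fix $\epsilon<\mu$ (note $\kappa_\epsilon\ge\al1$, since $\kappa_\epsilon^{\al0}=\kappa_\epsilon$). \emph{Upper bound} $\mycfa(f_\epsilon,g_\epsilon)\le\kappa_\epsilon$: let $\mathfrak B_\epsilon$ be the set of all ``finite variants'' of the $S_{\epsilon,i}$, $i<\kappa_\epsilon$, i.e.\ for each such $i$, each $k$, and each $t\in\prod_{m<k}f_\epsilon(m)$, the $(f_\epsilon,g_\epsilon)$-slalom equal to $\{t(m)\}$ below $k$ and to $S_{\epsilon,i}(m)$ above; so $\card{\mathfrak B_\epsilon}=\kappa_\epsilon\cdot\al0=\kappa_\epsilon$. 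Given $\nu\in\prod_n f_\epsilon(n)$ in $V[G]$, its support $C$ is countable, so (as $\kappa_\epsilon>\al0$) there is an $i^*$ with $(\epsilon,i^*)^{\mathrm{sl}}\notin C$; then $\nu\in W:=V[G\restriction(I\setminus\{(\epsilon,i^*)^{\mathrm{sl}}\})]$, and $f_\epsilon,g_\epsilon\in W$ as well (they depend only on the master part), and over $W$ the slalom $S_{\epsilon,i^*}$ catches $\nu$ for all but finitely many $n$; hence the appropriate member of $\mathfrak B_\epsilon$ catches $\nu$ everywhere, so $\mathfrak B_\epsilon$ is a $\forall$-covering family. \emph{Lower bound} $\mycfa(f_\epsilon,g_\epsilon)\ge\kappa_\epsilon$: let $\mathfrak B$ be any family of $(f_\epsilon,g_\epsilon)$-slaloms with $\card{\mathfrak B}<\kappa_\epsilon$. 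The (countable) supports of its members together meet fewer than $\kappa_\epsilon$ coordinates, so some $(\epsilon,i^*)^{\mathrm{re}}$ is not met; then $\mathfrak B\subseteq W:=V[G\restriction(I\setminus\{(\epsilon,i^*)^{\mathrm{re}}\})]$, over which $\eta_{\epsilon,i^*}\in\prod_n f_\epsilon(n)$ is generic and escapes every $(f_\epsilon,g_\epsilon)$-slalom, in particular every member of $\mathfrak B$; so $\mathfrak B$ is not $\forall$-covering. Hence $\mycfa(f_\epsilon,g_\epsilon)=\kappa_\epsilon$, and as $\epsilon$ was arbitrary the theorem follows.
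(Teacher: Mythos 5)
There is a genuine gap, and it is located exactly at the sentence ``these two demands never conflict''. They do conflict, under the commutation you yourself require. Fix $\epsilon$ and two distinct coordinates $a=(\epsilon,i)^{\mathrm{sl}}$ and $b=(\epsilon,i')^{\mathrm{re}}$, and write $W_a=V[G\restriction(I\setminus\{a\})]$, $W_b=V[G\restriction(I\setminus\{b\})]$. Since $b\neq a$, the evader $\eta_b$ lies in $W_a$, so by your catching requirement $S_a$ catches $\eta_b$ for all $n\geq N$ for some $N$. But $S_a\in W_b$, and so is every finite variant of $S_a$; in particular the slalom $B$ with $B(m)=\{\eta_b(m)\}$ for $m<N$ (a finite sequence of integers, hence in $V$) and $B(m)=S_a(m)$ for $m\geq N$ is an $(f_\epsilon,g_\epsilon)$-slalom in $W_b$ which catches $\eta_b$ at \emph{every} $n$. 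This directly refutes the claim ``$\eta_{\epsilon,i^*}$ escapes every $(f_\epsilon,g_\epsilon)$-slalom of $W$'' on which your lower bound rests; the relation ``old'' is symmetric between two coordinates of a commuting product, so each generic is old from the other's point of view. Weakening the catching requirement to ``reals of $V[G\restriction C]$ for the countable support $C$'' does not help, since the evader has such a support avoiding $a$. So with dedicated slalom coordinates present, the clean ``remove one coordinate'' argument for $\mycfa(f_\epsilon,g_\epsilon)\geq\kappa_\epsilon$ cannot work as stated, and it is not clear how to repair it inside your architecture.

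The paper avoids this by having no slalom coordinates at all. The index set is $\mu\cup\bigcup_\epsilon I_\epsilon$: at $\epsilon\in\mu$ one forces a Sacks real $\eta_\epsilon\in 2^\omega$ (with the extra requirement that distinct Sacks parts of a condition have incompatible stems), which \emph{defines} $f_\epsilon,g_\epsilon$; at the $\kappa_\epsilon$ coordinates of $I_\epsilon$ one forces finite approximations to fat trees, whose generics are the evaders for the lower bound (they only need to escape fewer than $\kappa_\epsilon$ slaloms whose names are supported away from the chosen coordinate, shown by a density argument using $2$-large splitting). The covering family witnessing the upper bound is not generic: it is the set of \emph{all} $(f_\epsilon,g_\epsilon)$-slaloms of the intermediate model $V[G_{P_\epsilon}]$, where $P_\epsilon$ is the complete subforcing on $\{\epsilon\}\cup I_\epsilon$, which forces $2^{\al0}=\kappa_\epsilon$. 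The hard lemma (the paper's Lemma \ref{lem2}) shows, via ``rapid reading'' of names, bigness/homogenization of successor sets, and the incompatible-stems clause (which at a splitting level belonging to some $\epsilon'\neq\epsilon$ decides lexicographically whether the relevant $f$-value is below $g_\epsilon(n)$ or must be thinned out), that every $\nu\in\prod_n f_\epsilon(n)$ of the full extension is $\forall$-caught by a slalom of $V[G_{P_\epsilon}]$. This is consistent with the lower bound precisely because the catching slaloms are allowed to depend on the evader's own coordinate, whereas in your design the catching objects were forced to live at other coordinates — which is what produced the contradiction. Your remaining points (the $\al2$-c.c.\ via a $\Delta$-system under CH, $\card P\leq\mu$, $2^{\al0}=\mu$, continuous reading of names via fusion) are in the same spirit as the paper, but the central mechanism for $\mycfa(f_\epsilon,g_\epsilon)\leq\kappa_\epsilon$ needs to be replaced by something like the paper's quotient-capturing argument rather than generic localization.
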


If we assume GCH, we can find such $\mu$ and $\kappa_\epsilon$ such that the
$\kappa_\epsilon$ are pairwise different,\footnote{Let $\mu=\aleph_\mu$ be the
  $\om1$-th iterate of the function $\alpha\mapsto \aleph_\alpha$ (taking the
  union at limits), and pick cardinals $\kappa_\epsilon<\mu$  with uncountable
  cofinality.}
i.e., we get continuum many pairwise different invariants in the extension.

For the rest of the paper we assume that the conditions of the theorem are
satisfied (in the ground model).

We will use $\epsilon,\epsilon',\epsilon_1,\dots$ for elements of
$\mu$.

\begin{Asm}
  $(g_{n,l})_{n\in \omega,0\leq l<2^n}$ and
  $(f_{n,l})_{n\in \omega,-1\leq l<2^n}$
  are sufficiently fast
  growing sequences of natural numbers,
  such that $0=f_{0,-1}$, 
  $f_{n+1,-1}=f_{n,2^n-1}$
  and $f_{n,l-1}\ll g_{n,l}\ll f_{n,l}$. 
  We set $\fmax(m)=f_{m,2^m-1}$ and 
  $\gmin(m)=g_{m,0}$. 
\end{Asm}
Sufficiently fast growing means the following:\footnote{The second inequality
  guarantees that there is a $g$-big norm (cf.\ \ref{lem:bignorm}), 
  and the first one is extracted from
  the proof of~\ref{lem2}. Obviously one can try to find weaker
  conditions, but we do not try to find optimal bounds in this paper.}
$g_{n,l}>2\cdot f_{n,l-1}^{n\cdot \fmax(n-1)^n}$%
, and
$f_{n,l}> g_{n,l}^{n+1}$%
. ($\fmax(n-1)^n$ denotes the $n$-th power of $\fmax(n-1)$.)

We identify $[0,2^n-1]$ with the set of binary sequences of
length $n$, ordered lexicographically.
So for $s\in 2^n$, we can define $f_s=f_{n,s}$ and $g_s=g_{n,s}$.
If $\eta\in 2^\omega$, then we can define $f:\omega\to\omega$ by
$f(n)=f_{\eta\restriction n}$, and $g$ analogously. 

We will define $P$ so that $P$ adds Sacks generics $\eta_\epsilon$
($\epsilon\in\mu$) and forces that
$\mycfa(f_\epsilon,g_\epsilon)=\kappa_\epsilon$ for the
$(f_\epsilon,g_\epsilon)$ defined by $\eta_\epsilon$.

Fix $s\in 2^n$. If $a$ is a subset of $f_s$ (i.e.\ of 
the interval $[0,f_s-1]$), we set
$\mu_s(a)=\ln_{g_s}(|a|)$. (Alternatively, We could use
any other $g_s$-big norm as well, i.e.\ a norm satisfying
the following:)

\begin{Lem}\label{lem:bignorm}
  $\mu_s: \mathfrak{P}(f_s)\to \mathbb{R}$ satisfies:
  ($a,b\subseteq f_s$)
  \begin{itemize}
    \item If $b\subseteq a$, then $\mu_s(a)\geq \mu_s(b)$.
    \item $\mu_s(f_s)\geq n$.
    \item $\mu_s(\{t\})<1$ for all $t\in f_s$.
    \item If $F$ is a function from $a$ to $g_s$, then
      there is a $b\subseteq a$ such that 
      $F\restriction b$ is constant
      and
      $\mu_s(b)\geq \mu_s(a)-1$.
  \end{itemize}
\end{Lem}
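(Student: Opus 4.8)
The plan is to verify the four clauses directly from the definition $\mu_s(a)=\ln_{g_s}(|a|)$ (logarithm to base $g_s$), using nothing beyond elementary properties of the logarithm together with the standing growth hypotheses on the sequences $(f_{n,l})$ and $(g_{n,l})$. Throughout, I would adopt the convention $\ln_{g_s}(0)=-\infty$ so that the empty set causes no trouble; note also that $g_s\geq 2$, so the base exceeds $1$ and $t\mapsto\ln_{g_s}(t)$ is weakly increasing on the nonnegative integers.

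The first and third clauses are immediate: if $b\subseteq a$ then $|b|\leq|a|$ and hence $\mu_s(b)\leq\mu_s(a)$ by monotonicity of $\ln_{g_s}$; and $\mu_s(\{t\})=\ln_{g_s}(1)=0<1$. For the second clause I would use that, under the identification of $f_s$ with the interval $[0,f_s-1]$, we have $|f_s|=f_s$, and by the assumed inequality $f_{n,l}>g_{n,l}^{n+1}$ (with $s\in 2^n$, so that $f_s=f_{n,s}$ and $g_s=g_{n,s}$) we get $f_s>g_s^{n+1}$, whence $\mu_s(f_s)=\ln_{g_s}(f_s)>n+1\geq n$.

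The fourth clause is a pigeonhole argument. Given a function $F\colon a\to g_s$, the fibres $F^{-1}(j)$ for $j<g_s$ partition $a$ into at most $g_s$ pieces, so some fibre $b\DEFEQ F^{-1}(j_0)$ satisfies $|b|\geq|a|/g_s$. On $b$ the function $F$ is constant, and
\[
  \mu_s(b)=\ln_{g_s}(|b|)\geq\ln_{g_s}(|a|)-\ln_{g_s}(g_s)=\mu_s(a)-1,
\]
which is exactly what is required.

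None of this presents a real obstacle; the only point where anything about the particular sequences enters is the second clause, where the bound $f_s>g_s^{n+1}$ comes straight from the ``sufficiently fast growing'' assumption, and one sees in passing that the requirement on the $f$'s is much more than enough. If one prefers to work with an abstract $g_s$-big norm rather than this concrete $\mu_s$, these four properties are simply taken as axioms and the rest of the paper invokes only them, so checking them here for the logarithmic norm suffices.
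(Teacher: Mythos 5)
Your verification is correct and is exactly the routine argument the paper has in mind (it omits the proof, noting only in a footnote that the growth condition $f_{n,l}>g_{n,l}^{n+1}$ is what guarantees a $g$-big norm): monotonicity and $\ln_{g_s}(1)=0$ give the first and third clauses, $f_s>g_s^{n+1}$ gives the second, and the pigeonhole estimate $|F^{-1}(j_0)|\geq |a|/g_s$ together with $\ln_{g_s}(g_s)=1$ gives the fourth. Nothing further is needed.
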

Note that $\mu_s(b)\geq 2$ implies that $|b|>g_s$.

Set $\omega^{\leq n}=\bigcup_{l\leq n}\omega^l$.
We will use trees $T\subseteq \omega^{<\omega}$ (or $2^{<\omega}$ or
$\omega^{\leq n}$). For a node
$s\in T\cap \omega^n$, $n$ is called the height of $s$. A branch $b$ in $T$ is a
maximal chain (i.e.\ a maximal set of pairwise comparable nodes).
We can identify $b$ with an element of $\omega^\omega$ (or $\omega^n$),
and denote with $b\restriction h$ the element of $b$ of height $h$ 
(for all $h<\omega$ or $h<n$, respectively).
A front $F$
in $T$ is a set of pairwise incomparable nodes such that every branch of
$T$ hits a node in $F$. When talking about nodes, we use the terms
``comparable'' and ``compatible'' interchangeably. We use the 
symbol $\incomp$ for incompatible (i.e.\ incomparable, when talking 
about nodes), and we use $\comp$ for
compatible. A  splitting node $s$ is a node with at least two
immediate successors.
The first splitting node is called
$\stem(T)$.

A Sacks condition $T$ is a perfect tree, i.e.\ $T\subseteq 2^{<\omega}$ is such
that for every $s\in T$ there is a splitting node $s'>s$.  Equivalently, along
every branch of $T$ there are infinitely many splitting nodes. So the set of
the $n$-th splitting nodes forms a front.

We will use Sacks conditions as well as other ``lim-sup'' finite splitting tree
forcings. Actually we will use finite approximations to such trees, but it
might be useful to first specify the objects we are approximating:
For $\eta\in 2^\omega$, $T$ is an $\eta$-tree,
if $T\subseteq \omega^{<\omega}$ is a tree without leaves (``dead ends'') such
that $s(n)<f_{\eta\restriction n}$ for all $s\in T$.  For an $\eta$-tree $T$
and $s\in T\cap \omega^n$, we set $\mu_{T}(s)=\mu_s(A)$, where $A$ is the set
of immediate $T$-successors of $s$.  $T$ is fat\label{def:fat} if
$\limsup_{n\to\infty}(\mu_{T}(b\restriction n))=\infty$
for every branch $b$ of $T$. $Q_\eta$
is the partial order of fat trees ordered by inclusion.\footnote{
  $Q_\eta$ is a special case of a lim-sup finite splitting tree forcing $Q$,
  informally defined as follows: 
  $Q$ is defined by a finite splitting tree $T_0$ and a norm on the
  successor sets. $T\subseteq T_0$ is a condition of $Q$
  if for all branches $b$ of $T$, the 
  $T$-norm of $b\restriction n$ gets arbitrarily large.
 
  Sacks forcing is a simple
  example of such a forcing: $T_0$ is $2^{<\omega}$.
  Pick $s\in 2^{<\omega}$ and set $A=\{s^\frown 0,s^\frown 1\}$.
  Then we set $\mu(A)=1$ and $\mu(B)=1$ for all proper subsets $B$ 
  of $A$.}

It is easy to see (and analogous to Sacks forcing) that all forcing notions
$Q_\eta$ are
and $\omega^\omega$-bounding.\footnote{This holds of course for all 
  lim-sup finite splitting tree forcings.}
In~\cite{MR1201650}, Goldstern and Shelah picked $\om1$ many
different $\eta_\epsilon$, defined $P_\epsilon$ to be the countable support
product of $\kappa_\epsilon$ many copies of $Q_{\eta_\epsilon}$, and defined
$P$ to be the countable support product of the $P_\epsilon$. Then $P$ forces
$\mycfa(f_\epsilon,g_\epsilon)=\kappa_\epsilon$.

We need $\mu>2^\al0$ many different $\eta$, so $\eta_\epsilon$ will be a name
 (for a Sacks real). Then we again want to use $\kappa_\epsilon$ many copies of
$Q_{\eta_\epsilon}$.  Instead of using a composition of forcings, we more
explicitly use finite approximations to fat trees:

\begin{Def}
  Assume $s\in 2^n$.
  \begin{itemize}
    \item
      $T$ is an $s$-tree if
      $T\subseteq \omega^{\leq n+1}$ is a tree, 
      every branch has length $n+1$ and
      $t(m)<f_{s\restriction m}$ for each $m\leq n$
      and $t\in T\cap \omega^{m+1}$.
    \item 
      For $m\leq n$ and
      $t\in T\cap \omega^m$, $t$ is an $l$-large splitting node, 
      if $\mu_{s\restriction m}(A)\geq l$ for 
      the set $A$ of immediate $T$-successors of $t$.
    \item
      $T$ has $l$-large splitting if the 
      set of $l$-large splitting nodes forms
      a front.
  \end{itemize}
\end{Def}

\begin{Def}
  \begin{itemize}
    \item
      For every $\epsilon$ in $\mu$, pick 
      some $I_\epsilon$ of size $\kappa_\epsilon$
      such that $\mu$ and all the $I_\epsilon$ are
      pairwise disjoint. Set $I=\mu\cup\bigcup_{\epsilon\in\mu}I_\epsilon$.
    \item
      We define $\varepsilon:I\to I$:
      If $\alpha\in I_{\epsilon}$, then $\varepsilon(\alpha)=\epsilon$.
      If $\epsilon\in\mu$, then $\varepsilon(\epsilon)=\epsilon$.
  \end{itemize}
\end{Def}
$I$ will be the index set of the product forcing.  We will 
use $\alpha,\beta,\dots$ for elements of $I$.

\begin{Def}\label{def:P}
  $p\in P$ consists of the following objects, satisfying the following 
  properties:
  \begin{enumerate}
    \item $\dom(p)\subseteq I$ is countable and closed under $\varepsilon$.
    \item If $\epsilon\in \dom(p)\cap \mu$,
      then $p(\epsilon)$ is a Sacks condition.
    \item\label{item:disjointtruks}
      If $\epsilon_1\neq \epsilon_2\in \dom(p)\cap \mu$,
      then $\stem(p(\epsilon_1))$ and $\stem(p(\epsilon_2))$
      are incompatible.
    \item\label{item:defP1} If $\alpha\in \dom(p)\cap I_\epsilon$,
      then $p(\alpha)$ is a function from $p(\epsilon)$ to the
      power set of $\omega^{<\omega}$ satisfying the following:
      \begin{enumerate}
        \item  If $s\in p(\epsilon)\cap 2^n$, then $p(\alpha,s)\subseteq
          \omega^{\leq n+1}$ is an $s$-tree.
        \item 
          If $s<t$ are in $p(\epsilon)$ and $s\in 2^n$,
          then $p(\alpha,s)=p(\alpha,t)\cap \omega^{\leq n+1}$.
        \item\label{item:defP2}
          For $l\in\omega$ 
          and $s\in p(\epsilon)$ there is an
          $s'>s$ in $p(\epsilon)$ such that
          $p(\alpha,s')$ has $l$-large splitting.
      \end{enumerate}
  \end{enumerate}
\end{Def}

Note that item \ref{item:disjointtruks} is a real restriction
in the sense that $P$ is not dense in the product defined 
as above but without item \ref{item:disjointtruks}.

Item \ref{item:defP2} implies also the following seemingly stronger variant (in
\ref{def:uniform} we will use yet another one):
If $p\in P$, $\alpha\in I_\epsilon\cap \dom(p)$, $l\in\omega$ and $s\in
p(\epsilon)$, then there is an $s'>s$ in $p(\epsilon)$ such that every branch
in $p(\alpha,s')$ has $l$ many $l$-large splitting nodes.
(Any finite $s$-tree can be $l$-large for finitely many $l$ only, so we can
first extend $s$ to some $s'_0$ witnessing $l$-largeness, then to some $s'_1$
witnessing $l_1$-largeness for some sufficiently large $l_1$ etc.)

The order on $P$ is the natural one:
\begin{Def}
  For $p,q\in P$, we define $q\leq p$ by:
  \begin{itemize}
    \item $\dom(q)\supseteq \dom(p)$.
    \item If $\alpha\in\dom(p)\cap \mu$, 
      then $q(\alpha)\subseteq  p(\alpha)$.
    \item If $\alpha\in\dom(p)\cap I_\epsilon$
      and $s\in q(\alpha)\cap \omega^n$, then
      $q(\alpha,s)\subseteq p(\alpha,s)$.
  \end{itemize}
\end{Def}

\begin{Def}
  \begin{itemize}
    \item For $\alpha\in I$, $\eta_\alpha$ is the 
      $P$-name of the generic at $\alpha$.\footnote{
        More formally: If $\epsilon\in \mu$, then $\eta_\epsilon=
        \bigcup_{p\in G}\stem(p(\epsilon))$.
        \\
        If $\alpha\notin\mu$,
        then $\eta_\alpha=\bigcup\{\stem(p(\alpha,s)):\,
        p\in G,s\in \stem(p(\varepsilon(\alpha)))\}$.
        }
    \item
      $f_\epsilon: \omega\to\omega$ is the $P$-name for the function
      defined by $f_\epsilon(n)=f_{\eta_{\epsilon}\restriction n}$, and
      analogously for $g_\epsilon$.
  \end{itemize}
\end{Def}

It is straightforward to check%
\footnote{This uses e.g.\ the fact
  that for every $p\in P$, $\alpha\in I$ and $h\in\omega$ there is a $q\leq p$
  such that $\alpha\in\dom(q)$ and all stems in $q$ have height at least $h$.
  To see that \ref{def:P}.\ref{item:disjointtruks} does not
  prevent us to increase the domain, use the argument in the proof
  of \ref{lem:complete}.}
that $\leq$ is transitive and that $\eta_\alpha$
is indeed the name of an element of $\omega^\omega$.
If $\alpha\in \mu$, then $\eta_\alpha\in 2^\omega$, otherwise
$\eta_\alpha(n)<f_{\varepsilon(\alpha)}(n)$ for all $n\in\omega$.

\section{Preservation of cardinals, $\kappa_\epsilon\leq \mycfa(f_\epsilon,g_\epsilon)$}


\begin{Lem}
  $P$ is $\al2$-cc.
\end{Lem}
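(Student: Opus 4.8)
My plan is the standard $\Delta$-system argument for countable support products, together with one extra pigeon-hole tailored to clause~\ref{item:disjointtruks} of Definition~\ref{def:P}.

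First I would record, using CH, that for a fixed countable $R\subseteq I$ there are only $\al1$ many conditions $p$ with $\dom(p)\subseteq R$: a condition is a countable function whose values each carry only $\al0$-much data — a Sacks condition (a subset of $2^{<\omega}$) or a tree-function — so there are $\le\al1^{\al0}=\al1$ of them. In fact there are only $\al1$ many \emph{types over $R$}, the type of a condition $p$ with $R\subseteq\dom(p)$ being $p\restriction R$ together with $p\restriction(\dom(p)\setminus R)$ taken modulo a bijection that fixes $R$ pointwise, commutes with $\varepsilon$, and respects the partition of $I$ into $\mu$ and the $I_\epsilon$; such a type is again coded by a countable sequence over a set of size $\al1$.

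Now let $(p_\xi)_{\xi<\om2}$ be given. Since $\al2$ is regular and $\al1^{\al0}<\al2$, the $\Delta$-system lemma produces $X\subseteq\om2$ of size $\al2$ with $(\dom(p_\xi))_{\xi\in X}$ a $\Delta$-system whose root $R$ is countable and $\varepsilon$-closed (an intersection of $\varepsilon$-closed sets), so each $p_\xi\restriction R$ is a condition. By the counting fact I then thin $X$, still to size $\al2$, so that all $p_\xi$ ($\xi\in X$) have one and the same type over $R$. Given any $\xi\neq\xi'$ in $X$, put $p=p_\xi$, $q=p_{\xi'}$, $p^*=p\restriction R=q\restriction R$, $A=\dom(p)\setminus R$, and let $\pi\colon\dom(p)\to\dom(q)$ be a witnessing type-isomorphism (the identity on $R$, carrying $p$ to $q$); then $\dom(q)=R\cup\pi[A]$, $A\cap\pi[A]=\emptyset$, and for $\epsilon\in A\cap\mu$ the Sacks conditions $p(\epsilon)$ and $q(\pi(\epsilon))$ share a stem $s_\epsilon$. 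I then construct a common extension $r$ with $\dom(r)=\dom(p)\cup\dom(q)$ (countable and $\varepsilon$-closed): set $r\restriction R=p^*$; for $\epsilon\in A\cap\mu$ let $r(\epsilon)=\{t\in p(\epsilon):t\comp s_\epsilon{}^{\frown}0\}$ and $r(\pi(\epsilon))=\{t\in q(\pi(\epsilon)):t\comp s_\epsilon{}^{\frown}1\}$; on every other coordinate $r$ agrees with $p$, resp.\ $q$, except that a tree-function sitting over a Sacks coordinate that was just shrunk is restricted to the smaller tree. One checks $r\le p$ and $r\le q$; since $\xi,\xi'\in X$ were arbitrary, $P$ is $\al2$-cc (in fact $\al2$-Knaster).

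The step I expect to be the only real content is the verification of clause~\ref{item:disjointtruks} for $r$ — every other clause of Definition~\ref{def:P} is routine, and this is exactly the clause the type-reduction was introduced for. Let $\epsilon_1\neq\epsilon_2$ be in $\dom(r)\cap\mu$. If both lie in $\dom(p)$, or both in $\dom(q)$, then the corresponding $r$-stems extend the $p$- (resp.\ $q$-) stems at $\epsilon_1,\epsilon_2$, which are incompatible because $p$ (resp.\ $q$) is a condition, and lengthening a node preserves incompatibility. In the only other case, $\epsilon_1\in A\cap\mu$ and $\epsilon_2=\pi(\delta)$ with $\delta\in A\cap\mu$, the $r$-stems extend $s_{\epsilon_1}$ and $s_\delta$ (incompatible when $\delta\neq\epsilon_1$, as $p$ is a condition) or $s_{\epsilon_1}{}^{\frown}0$ and $s_{\epsilon_1}{}^{\frown}1$ (incompatible when $\delta=\epsilon_1$). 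What makes this painless is precisely the passage to a common type over $R$: it turns two ``new'' Sacks coordinates that might clash into coordinates literally sharing a stem, which can be split apart by sending $p$ to one side and $q$ to the other side of that stem's first splitting node. Without this device one would instead have to separate finitely many already-placed stems from an arbitrary Sacks tree that may cover all of them — the sort of surgery performed in the proof of Lemma~\ref{lem:complete}, which the reduction here sidesteps.
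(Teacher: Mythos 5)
Your proof is correct and follows essentially the same route as the paper: a $\Delta$-system on the countable domains, a CH-counting argument to make the conditions isomorphic over the root (your ``type over $R$'' plays the role of the paper's thinning via fixed enumerations), and then gluing two conditions while splitting the shared stems of the paired new Sacks coordinates at their first splitting node to secure clause~\ref{item:disjointtruks}. The only differences are cosmetic: you fix the full isomorphism type (the paper only fixes $p\restriction u$, the Sacks parts, and the combinatorial pattern of the enumerations) and you phrase the conclusion as $\al2$-Knaster rather than refuting an antichain of size $\al2$.
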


\begin{proof}
  Assume towards a contradiction that
  $A$ is an antichain of size $\al2$.
  Without loss of generality $\{\dom(p):\, p\in A\}$ 
  forms a $\Delta$-system with root $u\subseteq I$.
  We fix enumerations $\{\alpha^p_0,\alpha^p_1,\dots\}$ of 
  $\dom(p)$ for all $p\in A$.
  We can assume
  that the following are independent of $p\in A$
  (for $i,j\in\omega$ and $\beta\in u$):
  $p\restriction u$; 
  the statements ``$\alpha^p_i=\beta$'',
  ``$\alpha^p_i\in \mu$'', 
  ``$\alpha^p_i=\varepsilon(\alpha^p_j)$'';
  and the sequence of Sacks conditions
  $(p(\alpha^p_i):\, \alpha^p_i\in \mu)$.

  Pick elements $p,q$ of $A$. We will show $p\comp q$.
  Take $p\cup q$ and modify it the following way:
  If $i\in\omega$ is such that $\alpha^p_i\in \mu$
  and $\alpha^p_i\neq \alpha^q_i$, then we
  extend the stems 
  of (the identical Sacks conditions) $p(\alpha^p_i)$ and 
  $q(\alpha^q_i)$ in an incompatible way (e.g.\ at the first 
  split, we choose the left node for $p$ and the right one
  for $q$).
  We call the result of this $r$.
  Then $r\in P$ and $r\leq p,q$:
  Assume that $\alpha^p_i\neq \alpha^q_j$ are in $\dom(r)\cap \mu$.
  If $i\neq j$, then $q(\alpha^q_j)=p(\alpha^p_j)$ has an incompatible
  stem with $p(\alpha^p_i)$, so the (possibly longer) stems 
  in $r$ are still incompatible. If
  $i=j$, we made the stems in $r$ incompatible.
\end{proof}

\begin{Lem}\label{lem:puredec}
  $P$ has fusion and pure decision.
  In particular $P$ has continuous reading of names, and
  $P$ is
  is proper and $\omega^\omega$-bounding.
  Therefore $P$ preserves all cardinals and
  forces $2^\al0=\mu$.
\end{Lem}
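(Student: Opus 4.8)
The statement bundles together several standard consequences of a fusion/pure-decision argument for a tree-like countable-support product. The plan is to establish the two basic combinatorial tools — a fusion lemma and a pure-decision lemma — and then harvest the remaining assertions as routine corollaries, exactly as for Sacks forcing and its relatives.

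\begin{proof}[Proof sketch]
The plan is as follows. First I would set up the fusion machinery. Fix $p\in P$ with $\dom(p)=\{\alpha_i:i\in\omega\}$ (closed under $\varepsilon$). For each pair $(i,n)$ one wants a notion of ``$q\leq_{i,n} p$'' expressing that $q$ does not shrink $p$ too much on coordinates $\alpha_0,\dots,\alpha_i$ below level $n$: on a Sacks coordinate $\alpha_j\in\mu$ this means the first $n$ splitting nodes of $q(\alpha_j)$ agree with those of $p(\alpha_j)$; on a coordinate $\alpha_j\in I_\epsilon$ it means that for every $s$ in the relevant finite part of $q(\epsilon)$, the tree $q(\alpha_j,s)$ keeps the first $n$ levels of $n$-large splitting nodes of $p(\alpha_j,s)$ (using the strengthening of \ref{def:P}.\ref{item:defP2} noted in the excerpt, which lets us demand many large splitting nodes on every branch, not just one front). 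A descending sequence $p_0\geq_{0,0}p_1\geq_{1,1}p_2\geq\cdots$ with the bookkeeping arranged so that every pair $(i,n)$ is eventually hit then has a natural fusion limit $q=\bigwedge p_k$: on coordinate $\epsilon\in\mu$ take $\bigcap_k p_k(\epsilon)$ (a perfect tree, as in Sacks), and on $\alpha\in I_\epsilon$ define $q(\alpha,s)=\bigcap_k p_k(\alpha,s)$ for $s\in q(\epsilon)$. One checks $q\in P$: clause \ref{item:disjointtruks} is preserved because stems only grow, and clause \ref{item:defP2} holds because each $p_k$ contributed a front of $k$-large splitting nodes that survives into $q$. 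This is the fusion lemma.

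Next, pure decision. Given $p$, a name $\tau$ for an ordinal (or a natural number), and a ``coordinate budget'' $(i,n)$, I would show there is $q\leq_{i,n}p$ and a front $F$ in the finite tree carved out of $\dom(q)$ restricted to $\alpha_0,\dots,\alpha_i$ below level $n$, such that $\tau$ is decided below every node of $F$. The core of this is the usual amalgamation-over-a-front argument, and the key point where this forcing differs from plain Sacks is that one must use the bigness of the norms $\mu_s$ (Lemma~\ref{lem:bignorm}, last clause): when one has, for each immediate successor of a splitting node on an $I_\epsilon$-coordinate, chosen a condition deciding $\tau$, one uses the last clause of \ref{lem:bignorm} to thin the successor set to a still-large (norm drops by at most $1$) set on which the decision is uniform, then re-glues. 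Iterating over the finitely many relevant splitting nodes and finitely many coordinates, and bleeding off only boundedly much norm, yields $q\leq_{i,n}p$ with the desired front. This is pure decision.

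Everything else is then formal. Interleaving pure decision into a fusion sequence in the standard way gives \emph{continuous reading of names}: for any $P$-name $\tau$ for an element of $\omega^\omega$ and any $p$, there is $q\leq p$ and a continuous (indeed, coded by the finite fronts) function reading $\tau$ off the generic branches on $\dom(q)$; in particular $q$ forces $\tau$ to be bounded by a ground-model function, so $P$ is $\omega^\omega$-bounding. Properness follows from fusion in the usual Axiom~A style: given a countable elementary submodel $N\ni p,P$, enumerate the dense sets (equivalently, the names for ordinals) of $N$ and build an $N$-generic fusion sequence using pure decision, whose limit is an $(N,P)$-generic condition below $p$. Since $P$ is proper it preserves $\al1$; since $P$ is $\al2$-cc (previous lemma) it preserves all cardinals $\geq\al2$; hence $P$ preserves all cardinals. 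Finally, $|P|=\mu$ by a counting argument ($|I|=\mu$, conditions have countable domain and countably much data, and $\mu^{\al0}=\mu$), so $P$ forces $2^{\al0}\leq\mu$; and the $\mu$ many mutually generic Sacks reals $\eta_\epsilon$ ($\epsilon\in\mu$) are forced to be distinct (their stems are incompatible by clause \ref{item:disjointtruks}), giving $2^{\al0}\geq\mu$, hence $2^{\al0}=\mu$.

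\medskip
\emph{Main obstacle.} The genuinely non-routine step is pure decision on the $I_\epsilon$-coordinates: one must run the amalgamation simultaneously over the Sacks coordinate $p(\epsilon)$ and the attached $s$-tree coordinates $p(\alpha,s)$, whose combinatorics change with $s$, while keeping clause \ref{item:defP2} alive and spending only a bounded amount of norm. The bigness of $\mu_s$ from \ref{lem:bignorm} and the fast-growth Assumption are exactly what make the norm-accounting close; getting the quantifier order of ``choose decisions, then homogenize, then reglue'' right across the front, uniformly in the finitely many active coordinates, is where the real work sits.
\end{proof}
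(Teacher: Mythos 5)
Your overall architecture (fusion plus pure decision, then harvesting continuous reading, $\omega^\omega$-bounding, properness, cardinal preservation and $2^{\al0}=\mu$) matches the paper, but the step you single out as the heart of pure decision is both misplaced and, as stated, would fail. You propose to decide $\tau$ below each successor of a splitting node on an $I_\epsilon$-coordinate and then use the bigness clause of Lemma~\ref{lem:bignorm} to thin the successor set to a norm-large set on which the decision is uniform. Bigness only homogenizes functions from a successor set into $g_s$, i.e.\ with fewer than $g_s$ possible values; for an arbitrary $P$-name of an ordinal the decided values form a function into an unbounded set (up to $|A|\gg g_s$ many distinct values), so Lemma~\ref{lem:bignorm} simply does not apply and no amount of norm-accounting closes. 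The fast-growth assumption makes such counting work only for names of elements of $\prod_n\fmax(n)$, which is exactly why the paper postpones all uses of bigness to the rapid-reading lemma and to Lemma~\ref{lem2}. Pure decision itself needs no norms at all: the paper fixes the finitely many positions $a\in\pos(p,\mathord\leq h)$, strengthens each $p\wedge a$ \emph{above} height $h$ to decide $\tau$ (allowing different values for different $a$), and glues the results back together so that $\pos(q,\mathord\leq h)\equiv\pos(p,\mathord\leq h)$; no successor set at or below $h$ is ever shrunk. Dropping your homogenization step and replacing it by this gluing repairs the argument, but as written your ``key mechanism'' is not sound.

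A secondary point: the largeness bookkeeping for fusion is not automatic from clause~\ref{def:P}.\ref{item:defP2}, which only gives, for each $s$, \emph{some} $s'>s$ with $l$-large splitting; to demand $l$-largeness at all nodes of a fixed level (as your $\leq_{i,n}$ relation and your fusion limit require) you need the density of \emph{uniform} conditions (Lemma~\ref{lem:wrq}), i.e.\ the step turning maximal antichains of largeness-witnesses into fronts by shrinking the Sacks coordinate; your sketch glosses over this. Finally, $|P|=\mu$ alone does not yield $2^{\al0}\leq\mu$: with only $\al2$-cc the nice-name count is $\mu^{\al1}$, which may exceed $\mu$ (e.g.\ if $\operatorname{cf}(\mu)=\om1$). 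The paper instead uses continuous reading of names: every real of the extension is read continuously from a single condition together with a ground-model real, and there are only $\mu^{\al0}=\mu$ such pairs. You have continuous reading available, so this is easily fixed, but the inference as you state it is too quick.
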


The proof is straightforward, but the notation a bit
cumbersome.
\begin{Def}
  \begin{itemize}
    \item
      $\pos(p,\mathord\leq n)$ is the set of
      sequences $a=(a(\alpha))_{\alpha\in \dom(p)}$
      such that  $a(\alpha)\in\omega^{n+1}$, 
      $a(\alpha)\in p(\alpha)$
      for $\alpha\in \mu$, and $a(\alpha)\in p(\alpha,a({\varepsilon(\alpha)}))$
      otherwise. 
    \item
      For $a\in \pos(p,\mathord\leq n)$, $p\wedge a$ is the 
      result of extending the stems in $p$ to $a$.\footnote{
        More formally: $[p\wedge a](\epsilon)$ is
        $\{s\in p(\epsilon):\, s\comp a(\epsilon)\}$ for
        $\epsilon\in\mu$, and
        \\
        $[p\wedge a](\alpha,s)$ is
        $\{t\in p(\alpha,s):\, t\comp a(\alpha)\}$ for
        $\alpha\in I_\epsilon$.
        $p\wedge a$ is again a condition in $P$.
      }
    \item 
      Let $\tau$ be a $P$-name. $\tau$ is
      $(\mathord\leq n)$-decided by $p$,
      if for all $a\in \pos(p,\mathord\leq n)$, $p\wedge a$ decides $\tau$
      (i.e.\ there is some $x\in V$ such that $p\wedge a$ forces $\tau=\std x$).
    \item Assume $q\leq p$. $\pos(p,\mathord\leq n)\equiv \pos(q,\mathord\leq n)$
      means that for all $a\in \pos(p,\mathord\leq n)$ there is exactly one
      $b\in \pos(q,\mathord\leq n)$ such that $a$ is $b$ restricted to
      $\dom(p)$. In other words: On $\dom(p)$, $p$ and $q$ are 
      identical up to height $n+1$, and the stems of $q$ outside
      of $\dom(p)$ have height at least $n+1$.
      If $\dom(q)=\dom(p)$, then $\pos(p,\mathord\leq n)\equiv \pos(q,\mathord\leq n)$
      is equivalent to $\pos(p,\mathord\leq n)=\pos(q,\mathord\leq n)$.
    \item $p\in P$ is finitary if $\pos(p,\mathord\leq n)$ is finite 
      for all $n\in\omega$.
  \end{itemize}
\end{Def}

\begin{Lem}\label{lem:finite}
  The set of finitary conditions is dense in $P$.
\end{Lem}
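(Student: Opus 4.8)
The plan is to show that failure of finitariness can only come from the coordinates outside $\mu$, and to repair all of them at once by thinning each such coordinate to a long linear initial segment, leaving the Sacks coordinates untouched; in particular, no fusion will be needed.

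First I would record that for \emph{every} $p\in P$ and every $n$ the ``$\mu$-part'' $\prod_{\epsilon\in\dom(p)\cap\mu}\bigl(p(\epsilon)\cap 2^{n+1}\bigr)$ of $\pos(p,\mathord\leq n)$ is automatically finite: the factor $p(\epsilon)\cap 2^{n+1}$ is a singleton unless $\stem(p(\epsilon))$ has height $\leq n$, and by \ref{def:P}.\ref{item:disjointtruks} the stems $\stem(p(\epsilon))$, $\epsilon\in\dom(p)\cap\mu$, are pairwise incomparable, hence only finitely many of them can lie in the finite set $2^{\leq n}$. The same observation shows that if $\dom(p)\setminus\mu$ is finite then $p$ is already finitary; so I may assume $\dom(p)\setminus\mu=\{\beta_i:i\in\omega\}$ is infinite, and the goal is to produce $q\leq p$ for which the factor at $\beta_i$ in $\pos(q,\mathord\leq n)$ is a singleton for all but finitely many $n$.

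The core step is a one-coordinate thinning. Fix $\beta\in\dom(p)\setminus\mu$, let $\epsilon=\varepsilon(\beta)$, and fix $N\in\omega$; I want an assignment $s\mapsto q(\beta,s)$ with $q(\beta,s)\subseteq p(\beta,s)$ for all $s\in p(\epsilon)$, altering only this coordinate, such that $q(\beta,s)$ has no splitting node of height $<N$ while \ref{def:P}.\ref{item:defP1} still holds. To build it I choose, for each $w$ in the finite front $p(\epsilon)\cap 2^{N}$, a node $r_w$ of height $N$ in the $w$-tree $p(\beta,w)$, coherently: fix one $w_0$ and $r_{w_0}$ arbitrarily, and for every other $w$ pick $r_w$ extending $r_{w_0}\restriction(k_w+1)$ inside $p(\beta,w)$, where $w\wedge w_0$ denotes the longest common initial segment of $w$ and $w_0$ and $k_w=|w\wedge w_0|$; this is legitimate since, by the coherence requirement in \ref{def:P}.\ref{item:defP1} applied at $w\wedge w_0<w$ and at $w\wedge w_0<w_0$, the trees $p(\beta,w)$ and $p(\beta,w_0)$ restrict to the same tree on $\omega^{\leq k_w+1}$. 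The elementary fact that, among the lengths of the pairwise longest common initial segments of $w_1,w_0$, of $w_2,w_0$ and of $w_1,w_2$, the smallest is attained at least twice then guarantees that $r_{w_1}$ and $r_{w_2}$ agree up to height $|w_1\wedge w_2|+1$, for all $w_1,w_2$ in the front. Now set $q(\beta,s):=\{t\in p(\beta,s):t\comp r_{s\restriction N}\}$ for $|s|\geq N$, and for $|s|<N$ let $q(\beta,s)$ be the linear tree of all initial segments of $r_w$ of length $\leq|s|+1$, for any $w\supseteq s$ in the front (well-defined by the coherence of the $r_w$ just established). One verifies routinely that this is a coherent family of $s$-trees and that $q(\beta,s)$ has no splitting node below height $N$.

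The delicate point, which I expect to be the main obstacle, is that the largeness clause \ref{def:P}.\ref{item:defP2} survives the thinning. Given $l$ and $s\in p(\epsilon)$, I apply the stronger variant of \ref{def:P}.\ref{item:defP2} recorded just after Definition \ref{def:P}, with parameter $l':=\max(l,N+1)$, to obtain $s'>s$ in $p(\epsilon)$ with $|s'|\geq N$ such that every branch of $p(\beta,s')$ carries $l'$ many $l'$-large splitting nodes; these are in particular $l$-large and lie at $l'\geq N+1$ distinct heights, so every branch of $p(\beta,s')$ through $r_{s'\restriction N}$ meets one of height $\geq N$, which therefore extends $r_{s'\restriction N}$. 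A node extending $r_{s'\restriction N}$ keeps all of its immediate successors in $q(\beta,s')$, hence its norm, so it is still an $l$-large splitting node of $q(\beta,s')$, and $q(\beta,s')$ has $l$-large splitting. Finally I would perform this thinning simultaneously for all $\beta_i$, taking $N:=i+1$, and set $q(\epsilon):=p(\epsilon)$ for $\epsilon\in\dom(p)\cap\mu$; the thinnings of distinct $\beta_i$ do not interfere, since each alters only its own coordinate and only reads the unchanged Sacks condition $p(\epsilon)$, so $q\in P$ and $q\leq p$. In $\pos(q,\mathord\leq n)$ the factor at $\beta_i$ is a singleton once $i\geq n$, the $\mu$-part is finite by the first step, and for each of the finitely many choices at the $\mu$-coordinates the remaining non-singleton factors are finitely many and each finite (since $s$-trees and the sets $p(\epsilon)\cap 2^{n+1}$ are finite); hence $\pos(q,\mathord\leq n)$ is finite for every $n$, i.e.\ $q$ is a finitary condition below $p$.
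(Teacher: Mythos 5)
Your strategy is the paper's own (the paper's one-line proof is: enumerate $\dom(p)$ as $(\alpha_i)_{i\in\omega}$ and extend all stems at $\alpha_i$ to height at least $i$), and two of your refinements are correct and worthwhile: the observation that by \ref{def:P}.\ref{item:disjointtruks} the $\mu$-part of $\pos(p,\mathord\leq n)$ is automatically finite, so the Sacks coordinates never need to be touched, and the careful check, via the strengthened form of \ref{def:P}.\ref{item:defP2}, that largeness survives the trunk extension. However, the step you yourself single out as the backbone of the construction --- the coherent choice of the nodes $r_w$ for $w$ in the front $p(\epsilon)\cap 2^N$ --- does not work as stated. You only require each $r_w$ to extend $r_{w_0}\restriction (k_w+1)$ for one fixed reference node $w_0$, and then deduce the pairwise coherence $r_{w_1}\restriction(\card{w_1\wedge w_2}+1)=r_{w_2}\restriction(\card{w_1\wedge w_2}+1)$ from the ultrametric fact. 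That fact does not cover the case $\card{w_1\wedge w_0}=\card{w_2\wedge w_0}<\card{w_1\wedge w_2}$ (the minimum being attained twice is exactly compatible with this configuration), and in that case the conclusion can fail for your construction: take $N=3$ and $w_0=000$, $w_1=100$, $w_2=110$ in $p(\epsilon)$. Your requirement ties $r_{w_1}$ and $r_{w_2}$ to $r_{w_0}$ only up to height $1$, so if $p(\beta,\cdot)$ splits at height $1$ you may choose them to diverge there although $\card{w_1\wedge w_2}=2$. Then $q(\beta,w_1\wedge w_2)$ is not well defined, and the coherence clause of Definition \ref{def:P} (that $q(\beta,s)=q(\beta,t)\cap\omega^{\leq \card{s}+1}$ for $s<t$) fails, since $q(\beta,t)\cap\omega^{\leq 3}$ gives different answers for $t\supseteq w_1$ and for $t\supseteq w_2$; so the object you build need not be a condition.

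The gap is local and easily repaired, but the repair must be made: choose the nodes coherently by recursion on the level rather than against a single $w_0$. By induction on $m\leq N$ pick, for every $s\in p(\epsilon)\cap 2^m$, a top node $\rho_s\in p(\beta,s)\cap\omega^{m+1}$ such that $\rho_{s'}=\rho_s\restriction(\card{s'}+1)$ for all $s'\subseteq s$; the induction step is possible because $p(\beta,s)\cap\omega^{\leq m+1}=p(\beta,s\restriction m)$ and every node of an $s$-tree extends to a branch of full length. Setting $r_w=\rho_w\restriction N$ for $w$ in the front gives exactly the pairwise coherence you need ($r_{w_1}$ and $r_{w_2}$ both extend $\rho_{w_1\wedge w_2}$), and the remainder of your argument --- the preservation of \ref{def:P}.\ref{item:defP2} via splits of height at least $N$ above $r_{s'\restriction N}$, the non-interference of the thinnings at different $\beta_i$, and the final counting of $\pos(q,\mathord\leq n)$ --- then goes through unchanged and is in substance the paper's intended argument.
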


(Enumerate $\dom(p)$ as $(\alpha_i)_{i\in\omega}$, and extend all stems at
$\alpha_i$ to height at least $i$.)

The set of finitary conditions is not open, but we get the following: If $p\in
P$ is finitary and $q\leq p$ is such that $\dom(q)=\dom(p)$, then $q$ is
finitary.

We now consider a strengthening of the property
~\ref{def:P}.\ref{item:defP2} of conditions in $P$:
\begin{Def}\label{def:uniform}
  $p$ is uniform, if for all $\alpha\in I_\epsilon$ and
  $l\in \omega$ there is a $h\in\omega$ such that
  $p(\alpha,s)$ is $l$-large for all $s\in p(\epsilon)\cap \omega^h$.
\end{Def}
First, we briefly comment on the connection between fronts
and maximal antichains in Sacks conditions:\footnote{Of course, the same
  applies to all lim-sup finite splitting tree forcings.}
Let $T$ be a perfect tree.  ``$A$ is a front'' is stronger than ``$A$ is a
maximal antichain''. In particular, it is possible that $p\in P$ is not
uniform, e.g. that for $\alpha\in I_\epsilon$ the set of nodes $s\in
p(\epsilon)$ such that $p(\alpha,s)$ has $1$-large splitting contains a maximal
antichain, but not a front. (For example, we can assume that 
$p(\epsilon)=2^{<\omega}$, $p(\alpha,0^n)$
has a trunk of length at least $n+1$, but that $p(\alpha,{0^n}^\frown 1)$ has
$1$-large splitting. So the nodes that guarantee $1$-large splitting contain
the maximal antichain $\{1,01,001,\dots\}$, but no front.)
However, if $A_1,A_2,\dots$ are maximal antichains in $T$, we can find a
perfect tree $T'\subseteq T$ such that $A_i\cap T'$ is a front in $T'$.
(Construct finite approximations $T_i$ to $T'$: For every leaf $s\in T_{i-1}$,
extend $s$ to some $s'$ above some element of
$A_i$ and further to some splitting node $s''$.
Let $T_i$ contain the successors of all these splitting nodes.)

This implies that the uniform conditions are dense:
\begin{Lem}\label{lem:wrq}
  Assume $p\in P$. Then there is a uniform $q\leq p$ such that $\dom(q)=\dom(p)$.
\end{Lem}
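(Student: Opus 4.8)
The plan is to change $p$ only on its Sacks coordinates: I would keep $\dom(q)=\dom(p)$ and $q(\alpha,s)=p(\alpha,s)$ for every $\alpha\in\dom(p)\setminus\mu$, and for each $\epsilon\in\dom(p)\cap\mu$ thin $p(\epsilon)$ to a perfect subtree $q(\epsilon)$ chosen by a fusion that turns the relevant antichains into fronts. Since the product structure lets us treat each $\epsilon\in\dom(p)\cap\mu$ separately, and there are only countably many of them, it suffices to do this one $\epsilon$ at a time.

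Fix $\epsilon\in\dom(p)\cap\mu$. There are only countably many pairs $(\alpha,l)$ with $\alpha\in\dom(p)\cap I_\epsilon$ and $l\in\omega$. For each such pair let $A_{\alpha,l}$ be the set of $\subseteq$-minimal $s\in p(\epsilon)$ for which $p(\alpha,s)$ is $l$-large. By minimality this is an antichain; and it is predense in $p(\epsilon)$, because by \ref{def:P}.\ref{item:defP2} any $v\in p(\epsilon)$ has an extension $v'$ with $p(\alpha,v')$ $l$-large, and then the $\subseteq$-least $w\le v'$ with $p(\alpha,w)$ $l$-large lies in $A_{\alpha,l}$ and is comparable with $v$. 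Thus $A_{\alpha,l}$ is a maximal antichain in $p(\epsilon)$. Enumerating all these antichains as $(A_i)_{i\in\omega}$ and running the front-building construction recalled just before the lemma --- starting the fusion from the finite subtree of $p(\epsilon)$ whose leaves are the two immediate successors of $\stem(p(\epsilon))$ --- produces a perfect tree $q(\epsilon)\subseteq p(\epsilon)$ with $\stem(q(\epsilon))=\stem(p(\epsilon))$ such that $A_i\cap q(\epsilon)$ is a front in $q(\epsilon)$ for every $i$.

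Having done this for all $\epsilon\in\dom(p)\cap\mu$, I would set $\dom(q)=\dom(p)$, take these $q(\epsilon)$, and let $q(\alpha,s)=p(\alpha,s)$ for $\alpha\in\dom(p)\cap I_\epsilon$ and $s\in q(\epsilon)$. Then $q\le p$ and $\dom(q)=\dom(p)$ by construction, so it remains to verify $q\in P$ and that $q$ is uniform. All clauses of \ref{def:P} are immediate except \ref{def:P}.\ref{item:defP1}: its $s$-tree and coherence requirements pass down to $q$ because the $s$-trees are unchanged and $q$ has only fewer comparable pairs $s<t$; and \ref{def:P}.\ref{item:disjointtruks} survives because the stems of the $p(\epsilon)$ are unchanged. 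For the last requirement \ref{def:P}.\ref{item:defP2} --- and for uniformity --- I would first note the obvious monotonicity fact: if $s<t$ in $p(\epsilon)$ and $p(\alpha,s)$ is $l$-large, then so is $p(\alpha,t)$, since by coherence the $l$-large splitting nodes of $p(\alpha,s)$ are still $l$-large splitting nodes of $p(\alpha,t)$ (with the same successor sets and norms) and still form a front. Now given $\alpha$, $l$, $s\in q(\epsilon)$, pick $i$ with $A_i=A_{\alpha,l}$; since $A_i\cap q(\epsilon)$ is a front, either $s$ extends some element of $A_i\cap q(\epsilon)$, in which case $p(\alpha,s')$ is $l$-large for every $s'>s$ in $q(\epsilon)$ by monotonicity, or some $s'>s$ in $q(\epsilon)$ lies in $A_i$ and $q(\alpha,s')=p(\alpha,s')$ is $l$-large; either way \ref{def:P}.\ref{item:defP2} holds. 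Finally, $q(\epsilon)$ is finitely branching, so the front $A_{\alpha,l}\cap q(\epsilon)$ is finite by König's lemma; letting $h$ exceed the height of all its nodes, every $s\in q(\epsilon)\cap 2^h$ extends an element of the front, hence $q(\alpha,s)=p(\alpha,s)$ is $l$-large by monotonicity --- which is precisely uniformity (Definition~\ref{def:uniform}).

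The whole construction is essentially bookkeeping; the only spots that need a moment's thought are the monotonicity observation and the point used in the very last step, namely that a front in the finitely branching tree $q(\epsilon)$ has bounded height while a maximal antichain need not --- which is exactly why the front-building step, rather than merely thinning $p(\epsilon)$ so that the $A_{\alpha,l}$ become maximal antichains, is needed, as the discussion preceding the lemma already flags.
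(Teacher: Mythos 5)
Your proof is correct and takes essentially the same route as the paper's: for each $\epsilon\in\dom(p)\cap\mu$ you turn the (dense, open) sets of nodes $s$ with $p(\alpha,s)$ $l$-large into maximal antichains of $p(\epsilon)$ and then thin $p(\epsilon)$ to a perfect subtree in which these antichains become fronts, leaving the $I_\epsilon$-coordinates untouched. The additional verifications you spell out (monotonicity of $l$-largeness under extension, membership of $q$ in $P$, and boundedness of fronts via K\"onig's lemma) are exactly the details the paper leaves implicit.
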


\begin{proof}
  Fix $\epsilon\in\mu$. Enumerate $\dom(p)\cap I_\epsilon$ as
  $\alpha_0,\alpha_1,\dots$. For
  $i,l\in\omega$ and $s\in p(\epsilon)$ and
  there is an $s'>s$ such that 
  $p(\alpha_i,s')$ has $l$-large splitting. This gives 
  (open) dense sets $D_{i,l}\subseteq p(\epsilon)$.
  Choose maximal antichains $A_{i,l}\subseteq D_{i,l}$.
  Then there is a perfect tree $q(\epsilon)\subseteq p(\epsilon)$
  such that $A_{i,l}\cap q$ is a front in $q$ for all $i,l\in\omega$.
\end{proof}

We can also fix $p$ up to some height $h$ and do the construction starting
with $h$. Then we get:

\begin{Lem}
 Assume that $p\in P$, $h\in\omega$ and that $\pos(p,\mathord\leq h)$ is finite. 
 Then there is a finitary, uniform $q\leq p$ such that
 $\dom(p)=\dom(q)$ and $\pos(p,\mathord\leq h)=\pos(q,\mathord\leq h)$.
\end{Lem}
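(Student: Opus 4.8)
The plan is to combine Lemma~\ref{lem:finite} with the construction used in Lemma~\ref{lem:wrq}, but carried out above a fixed height $h$ rather than from the stems. First I would observe that since $\pos(p,\mathord\leq h)$ is finite, $p$ is ``finitary up to $h$'': extending all stems of $p$ at $\alpha_i$ to height at least $\max(i,h+1)$ as in the hint to Lemma~\ref{lem:finite} produces a finitary $p'\leq p$ with $\dom(p')=\dom(p)$; moreover, because the extensions happen at height $\geq h+1$ and do not touch the part of $p$ below height $h+1$, we get $\pos(p',\mathord\leq h)=\pos(p,\mathord\leq h)$. (One should check $p'\in P$, i.e.\ that the $s$-trees and the $l$-large splitting requirement \ref{def:P}.\ref{item:defP2} are preserved, which is clear since we only pass to subtrees and extend stems.) So without loss of generality we may already assume $p$ itself is finitary.

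Next I would run the argument of Lemma~\ref{lem:wrq} with a minor modification to respect the height $h$. Fix $\epsilon\in\mu\cap\dom(p)$ and enumerate $\dom(p)\cap I_\epsilon$ as $\alpha_0,\alpha_1,\dots$. For each $i,l\in\omega$ the set $D_{i,l}=\{s'\in p(\epsilon):\, p(\alpha_i,s')\text{ has }l\text{-large splitting}\}$ is open dense in $p(\epsilon)$ by \ref{def:P}.\ref{item:defP2}. Choose a maximal antichain $A_{i,l}\subseteq D_{i,l}$ consisting of nodes of height $>h$ (possible since $D_{i,l}$ is dense and open, so it still meets $p(\epsilon)$ above every node of height $h$). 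Now apply the ``fronts from antichains'' construction from the discussion preceding Lemma~\ref{lem:wrq}, but starting the finite approximations $T_i$ from the front $p(\epsilon)\cap 2^{h}$ instead of from $\stem(p(\epsilon))$: this yields a perfect tree $q(\epsilon)\subseteq p(\epsilon)$ with $q(\epsilon)\cap 2^{\leq h}=p(\epsilon)\cap 2^{\leq h}$ such that $A_{i,l}\cap q(\epsilon)$ is a front in $q(\epsilon)$ for all $i,l$. For $\alpha\in\dom(p)\cap I_\epsilon$ set $q(\alpha)=p(\alpha)\restriction q(\epsilon)$ (restrict the domain of the function $p(\alpha)$ to $q(\epsilon)$; the $s$-trees themselves are unchanged). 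Do this for every $\epsilon\in\mu\cap\dom(p)$, and set $\dom(q)=\dom(p)$.

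Then I would verify the three required conclusions. First, $q\leq p$ and $\dom(q)=\dom(p)$ by construction. Since $q(\epsilon)$ agrees with $p(\epsilon)$ below height $h+1$ and all $\alpha\in I_\epsilon$-coordinates are unchanged there, we have $\pos(q,\mathord\leq h)=\pos(p,\mathord\leq h)$; in particular this set is finite, and combined with $\dom(q)=\dom(p)$ and the remark after Lemma~\ref{lem:finite}, $q$ is finitary (here I use that $p$ was already made finitary in the first step, so $q$, having the same domain, is finitary too). Second, $q$ is uniform: fix $\alpha=\alpha_i\in I_\epsilon$ and $l$; since $A_{i,l}\cap q(\epsilon)$ is a front in $q(\epsilon)$, it is in particular a finite set once we also use that $q$ is finitary... more carefully, a front in a finitely-splitting tree that is ``finitary'' need not be finite, so instead I argue directly: for each branch $b$ of $q(\epsilon)$ there is a height at which $b$ hits $A_{i,l}\cap q(\epsilon)$, and above that height the corresponding $s$-tree $q(\alpha,s)$ remains $l$-large (being a superset of an $l$-large tree along that branch, or rather: $l$-largeness of $p(\alpha,s')$ for $s'\in A_{i,l}$ persists to all $s\geq s'$ by the coherence condition \ref{def:P}.\ref{item:defP1} combined with monotonicity of $\mu_s$ from Lemma~\ref{lem:bignorm}). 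By König's lemma applied to the front, there is a uniform bound $h'$ with $q(\alpha,s)$ $l$-large for all $s\in q(\epsilon)\cap\omega^{h'}$, which is exactly Definition~\ref{def:uniform}. Third, $q\in P$: items \ref{def:P}(1)--(3) are inherited since we only shrank perfect trees (keeping stems, hence keeping incompatibility of stems among $\epsilon\in\mu$) and restricted function domains; item \ref{def:P}.\ref{item:defP1}(a),(b) are inherited verbatim; and \ref{def:P}.\ref{item:defP2} follows from uniformity, which is stronger. The main obstacle is the bookkeeping in the uniformity verification — specifically making sure that ``$l$-large splitting at the antichain $A_{i,l}$'' genuinely upgrades to a uniform height once we have arranged $A_{i,l}$ to be a front, which is where the front-versus-maximal-antichain distinction from the preceding discussion does its work; everything else is routine inheritance of the defining properties of $P$.
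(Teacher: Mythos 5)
Your overall strategy is exactly the one the paper has in mind (the paper only says: fix $p$ up to height $h$ and do the constructions of Lemma~\ref{lem:finite} and Lemma~\ref{lem:wrq} starting at $h$), and most of your verifications are sound: choosing the maximal antichains inside the dense sets $D_{i,l}$ above the fixed part, turning them into fronts while keeping the bottom of $p(\epsilon)$, the upward persistence of $l$-large splitting via the coherence condition \ref{def:P}.\ref{item:defP1}(b), the finiteness of a front in a subtree of $2^{<\omega}$ to extract the uniform height required by Definition~\ref{def:uniform}, and the remark after Lemma~\ref{lem:finite} to get finitariness of the final condition.

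The gap is in how you arrange $\pos(q,\mathord\leq h)=\pos(p,\mathord\leq h)$. The hypothesis that $\pos(p,\mathord\leq h)$ is finite does \emph{not} mean that all stems of $p$ reach height $h+1$; it only means that all but finitely many coordinates have no splitting at heights $\leq h$ (each single coordinate contributes finitely many length-$(h+1)$ nodes anyway). For a coordinate that does split low --- e.g.\ $\dom(p)=\{\epsilon\}$, $p(\epsilon)=2^{<\omega}$ --- extending the stem to height $\geq h+1$ prunes the tree at levels $\leq h+1$ and collapses $\pos(\cdot,\mathord\leq h)$ from $2^{h+1}$ elements to one; so your claim that these extensions ``do not touch the part of $p$ below height $h+1$'' is false, and after your first step the desired equality already fails. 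The repair is easy: leave the finitely many coordinates that split at heights $\leq h$ completely untouched (finitely many coordinates never threaten finiteness of $\pos(\cdot,\mathord\leq n)$) and extend stems only at the remaining coordinates, whose unique node of length $h+1$ is unaffected. There is also a related off-by-one: $\pos(p,\mathord\leq h)$ consists of tuples of nodes of length $h+1$, so the condition must be kept intact up to level $h+1$, not just on $2^{\leq h}$; as written ($T_0=p(\epsilon)\cap 2^{\leq h}$, antichains of height $>h$) your front construction may discard a successor at level $h+1$ of a node splitting at height $h$. Start instead from $T_0=p(\epsilon)\cap 2^{\leq h+1}$ (i.e.\ require $q(\epsilon)\cap 2^{\leq h+1}=p(\epsilon)\cap 2^{\leq h+1}$, matching the paper's convention ``identical up to height $n+1$'') and choose the antichains above height $h+1$. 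With these two adjustments your argument goes through and coincides with the intended proof.
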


Using this notation, we can finally prove continuous reading of names:
\begin{proof}[Proof of Lemma \ref{lem:puredec}]
  {\bf Pure decision:}
  Fix $p\in P$ finitary, $h\in \omega$ and a $P$-name $\tau$
  for an ordinal.
  We can find a finitary, uniform $q\leq p$ which
  $(\mathord \leq h)$-decides $\tau$,
  such that $\pos(p,\mathord\leq h)\equiv\pos(q,\mathord\leq h)$.

  Proof: Enumerate $\pos(p,\mathord\leq h)$ as $a_0,\dots,a_{l-1}$.
  We just strengthen each $p\wedge a_i$ to decide $\tau$
  and glue back together the resulting conditions. More
  formally:
  Set $p_0=p$. Let $0\leq i<l$. We assume that we have 
  constructed $p_i\leq p$ such that 
  $\pos(p_i,\mathord\leq h)\equiv \pos(p,\mathord\leq h)$.
  Let $b\in \pos(p_i,\mathord\leq h)$ correspond to $a_i\in \pos(p,\mathord\leq h)$,
  and find a finitary $p'\leq p_i \wedge b$ deciding $\tau$,
  so that the length of all stems are at least $h+1$.
  Define $p_{i+1}$ the following way:
  $\dom(p_{i+1})=\dom(p')$.
  \begin{itemize}
    \item If $\alpha\in\dom(p')\setminus \dom(p_i)$,
      then $p_{i+1}(\alpha)=p'(\alpha)$.
    \item If $\epsilon\in  \dom(p_i)\cap \mu$,
      then $p_{i+1}(\epsilon)=p'(\epsilon)\cup \{s\in p_i:\ s\incomp
      b(\epsilon)\}$.
    \item Assume that $\alpha\in \dom(p_i)\cap I_\epsilon$.
      If $s\in p_{i}(\epsilon)\setminus p'(\epsilon)$,
      or if $s\in p'(\epsilon)$ is incompatible with 
      $b(\epsilon)$, 
      then $p_{i+1}(\alpha,s)=p_i(\alpha,s)$. Otherwise, 
  $p_{i+1}(\alpha,s)=p'(\alpha,s)\cup \{t\in p_i(\alpha,s):\, t\incomp b(\alpha)\}$.
  \end{itemize}
  Note that $p_{i+1}\leq p_i$, 
  $\pos(p_{i+1},\mathord\leq h)\equiv \pos(p_{i},\mathord\leq h)$ and 
  $p_{i+1}\wedge b =p'$.
  Let $q\leq p_{l}$ be finitary and uniform such that 
  $\pos(q,\mathord\leq h)\equiv \pos(p_{l},\mathord\leq h)$.
  Then $q\leq p$, $\pos(q,\mathord\leq h)\equiv \pos(p,\mathord\leq h)$
  and $q\wedge b$ decides $\tau$ for each  
  $b\in \pos(q,\mathord\leq h)$.

  {\bf Fusion:}
  Assume the following:
  \begin{itemize}
    \item $p_0\geq p_1\geq \dots$ is a sequence of finitary, uniform conditions in $P$.
    \item $h_0,h_1,\dots$ is an increasing sequence of natural numbers.
    \item $\pos(p_{n+1},\mathord\leq h_n)\equiv \pos(p_n,\mathord \leq h_n)$.
    \item $u_n\subseteq \dom(p_n)$ is finite and $\varepsilon$-closed for
      $n\in\omega$. Every $\alpha\in\bigcup_{n\in\omega}\dom(p_n)$ is
      contained in infinitely many $u_i$.
    \item If $\epsilon\in u_n \cap \mu$, then the height of the front of 
      $n$-th splitting nodes in $p_n(\alpha)$ is below $h_n$ 
      (i.e. the front is a subset of $2^{\leq h_n}$).\\
      If $\alpha\in u_n \cap I_\epsilon$ and
      $s\in p_n(\epsilon)\cap \omega^{h_n}$,
      then $p_n(\epsilon,s)$ has $n$-large splitting.
  \end{itemize}  
  Then there is a canonical limit $q\leq p_i$ in $P$.

  Proof: $q(\epsilon)$ is defined by
  $\dom(q)=\bigcup_{n\in\omega}\dom(p_n)$,
  $q(\epsilon)\cap 2^{h_i+1}=p_i(\epsilon)$, and analogously
  for $q(\alpha,s)$. Pick $\alpha\in P_\epsilon$, $s\in q(\epsilon)$
  and $l\in\omega$. Pick $n>l$ such that $\alpha\in u_n$.
  Then $p_n(\alpha,s')$ has $l$-large splitting
  for some $s'\comp s$ in $p_n(\epsilon)$. 

  {\bf Continuous reading of names, $\omega^\omega$-bounding:}
  Let $\nu$ be the name of a function from $\omega$ to $\omega$ and
  $p\in P$. Then there is an increasing
  sequence $(h_i)_{i\in\omega}$ and a
  finitary $q\leq p$ which
  $(\mathord\leq h_i)$-decides $\nu\restriction h_i$ for all
  $i\in\omega$.\footnote{Or
    $\nu\restriction 2\cdot h_i$ or just $\nu(i)$ etc., 
    that does not make any difference at that stage.}

  Proof: Pick $p_0\leq p$ finitary and uniform. 
  Construct a sequence $p_0\geq p_1\geq \dots$ suitable for
  fusion the following way: Given $p_i$, find  (by
  some bookkeeping) $u_i\subseteq \dom(p_i)$,
  pick $h_i$ large enough to witness largeness of $p_i$ $u_i$,
  and then (using pure decision)
  find $p_{i+1}$ which $(\mathord\leq h_i)$-decides $\nu\restriction h_i$.

  {\bf Properness:}
  Let $\chi$ be a sufficiently large regular cardinal, and
  let $N\esm H(\chi)$ be a countable elementary submodel, $p\in P\cap N$.
  We have to show that there is a $q\leq p$ forcing 
  $\tau\in \std N$ for every $P$-name $\tau\in N$ for an ordinal.
  We can enumerate (in $V$) all the names $\tau_i$ of ordinals in $N$.
  As above, we pick an sequence $p\geq p_0\geq p_1\geq \dots$ 
  suitable for fusion such that
  $p_i\in N$ is $(\mathord\leq h_i)$-deciding $\tau_i$ (for
  the $h_i$ used for fusion).
  In $V$, we fuse the sequence to some $q\leq p$. Then $q$ is $N$-generic.

  {\bf Preservation of cardinals} follows from $\al2$-cc and properness.

  {\bf Continuum is forced to be $\mathbb\mu$}: Let $\tau$ be the name
  of a real, and $p\in P$. There is a $q\leq p$ continuously reading
  $\tau$. I.e.\ $\tau$ can be read off $q\in P$ in a recursive manner
  (using a real parameter in the ground model). The size of
  $P$ is $\mu^\al0=\mu$, so there are only $\mu$ many
  reals that can be read continuously from some $q$. On the other hand,
  the $\eta_\epsilon$ are forced to be pairwise different.
\end{proof}

\begin{Lem}
  $P$ forces that $\kappa_\epsilon\leq \mycfa(f_\epsilon,g_\epsilon)$.
\end{Lem}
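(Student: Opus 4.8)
The plan is to argue by contraposition: assume some $p\in P$ forces that $\SlFam=\{B_\zeta:\zeta<\lambda\}$ is a $\forall$-covering family of $(f_\epsilon,g_\epsilon)$-slaloms with $\lambda<\kappa_\epsilon$, and derive a contradiction by exhibiting one of the generics $\eta_{\alpha^*}$ ($\alpha^*\in I_\epsilon$) which $p$ forces to escape every $B_\zeta$. First, if $\lambda\le\al0$ there is nothing to do: the growth assumption $f_{n,l}>g_{n,l}^{n+1}$ forces $f_\epsilon(n)$ to eventually exceed $(n{+}1)\cdot g_\epsilon(n)$, so inside $V[G]$ one diagonalises against countably many $(f_\epsilon,g_\epsilon)$-slaloms directly. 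So assume $\al1\le\lambda<\kappa_\epsilon$. Using continuous reading of names (Lemma~\ref{lem:puredec}) -- which produces, below any condition, a condition with \emph{countable} domain continuously reading a prescribed name for a real -- together with the $\al2$-cc of $P$ (to bound by $\al1$ the relevant maximal antichain), I get for every $\zeta$ a set $C_\zeta\subseteq I$ with $|C_\zeta|\le\al1$ such that $p$ forces $B_\zeta\in V[G\restriction C_\zeta]$. Let $C$ be the $\varepsilon$-closure of $\{\epsilon\}\cup\bigcup_{\zeta<\lambda}C_\zeta$; then $|C|\le\lambda<\kappa_\epsilon=|I_\epsilon|$, so I may pick $\alpha^*\in I_\epsilon\setminus C$ and, after enlarging the domain of $p$ (cf.\ the remarks after Definition~\ref{def:P} and Lemma~\ref{lem:complete}), assume $\alpha^*\in\dom(p)$. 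Writing $V_1=V[G\restriction C]$ and using the factor lemma (Lemma~\ref{lem:complete}), $P\restriction C$ is completely embedded in $P$, $P=(P\restriction C)\ast\dot R$, and $\eta_\epsilon$, $f_\epsilon$, $g_\epsilon$ and all the $B_\zeta$ live in $V_1$.

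\textbf{The generic $\eta_{\alpha^*}$ is a fat-tree generic over $V_1$.} Next I would show that the $\alpha^*$-coordinate of $\dot R$ is, over $V_1$, a dense subforcing mapping onto the fat-tree forcing $Q_{\eta_\epsilon}$: for a uniform condition $q$ of $\dot R$ (uniform conditions are dense by Lemma~\ref{lem:wrq} and Definition~\ref{def:uniform}) the coherence clause of \ref{def:P}.\ref{item:defP1} makes $T_q:=\bigcup_n q(\alpha^*,\eta_\epsilon\restriction n)$ a tree, and \ref{def:P}.\ref{item:defP2} together with uniformity makes $T_q$ \emph{fat}; moreover $q\mapsto T_q$ is a projection onto $Q_{\eta_\epsilon}$, so $\eta_{\alpha^*}$ becomes the branch of a $Q_{\eta_\epsilon}$-generic filter over $V_1$. (Alternatively one can bypass the factorisation and prove directly, by a density argument inside $P$ below $p$, that for each $(P\restriction C)$-name $B$ for an $(f_\epsilon,g_\epsilon)$-slalom, $p$ forces $\eta_{\alpha^*}(n)\notin B(n)$ for infinitely many $n$; the engine is the same: a fat tree stays fat when restricted to pass through any node, translated to the finitary approximations $q(\alpha^*,\cdot)$.)

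\textbf{Fat-tree generics escape ground-model slaloms, and conclusion.} The point is then a short norm computation: in $V_1$, given any $(f_\epsilon,g_\epsilon)$-slalom $B$, a fat tree $T$, and $n_0$, pick along some branch of $T$ a node $u$ of height $m>n_0$ with $\mu_T(u)\ge 2$; by the remark following Lemma~\ref{lem:bignorm} the immediate-successor set $A$ of $u$ in $T$ has $|A|>g_{\eta_\epsilon\restriction m}=g_\epsilon(m)>|B(m)|$, so some $j\in A\setminus B(m)$, and the (again fat) subtree of $T$ through $u^\frown j$ forces the generic branch to take value $j\notin B(m)$ at $m$. Iterating, $Q_{\eta_\epsilon}$ forces its generic branch to avoid $B$ infinitely often. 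Applying this over $V_1$ to each $B_\zeta\in V_1$, $p$ forces that $\eta_{\alpha^*}$ is caught by no $B_\zeta$; since $\eta_{\alpha^*}\in\prod_n f_\epsilon(n)$, this contradicts that $\SlFam$ is $\forall$-covering, and hence $P$ forces $\kappa_\epsilon\le\mycfa(f_\epsilon,g_\epsilon)$.

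\textbf{Main obstacle.} The localisation step and the norm computation are routine; the real work is the middle step -- checking that the restriction/projection used there is legitimate in the \emph{finitary} product $P$. Concretely, cutting every $q(\alpha^*,s)$ down so that it runs through a prescribed node must preserve both the coherence clauses of \ref{def:P}.\ref{item:defP1} and, above all, the largeness clause \ref{def:P}.\ref{item:defP2}; since this cutting can destroy the large-splitting fronts lying below the chosen node, one has to locate the escaping splitting node above the relevant height and arrange, via uniformity (Definition~\ref{def:uniform}, Lemma~\ref{lem:wrq}) and the ``$l$ many $l$-large splitting nodes'' reformulation following Definition~\ref{def:P}, that enough large splitting remains above it. This bookkeeping -- not any new idea -- will be the bulk of the argument.
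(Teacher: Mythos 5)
Your endgame (a $2$-large splitting node has more than $g_\epsilon(m)$ successors while a slalom has $|B(m)|<g_\epsilon(m)$, so one successor escapes) and your localisation step (antichains of conditions reading the $B_\zeta$ continuously, $\al2$-cc, pick $\alpha^*\in I_\epsilon$ outside all their domains) are exactly the right ingredients and match the paper. But your main route then diverges sharply: the paper never factors $P$ over $P\restriction C$ and never proves any genericity over an intermediate model. It simply takes the single $B_i$ and the single $q\le p$ forcing $\eta_{\alpha^*}(n)\in B_i(n)$ for \emph{all} $n$, arranges $q$ uniform and below a member of the antichain for $B_i$ (so $B_i$ is read continuously from coordinates other than $\alpha^*$), picks $h$ so that $q(\alpha^*,s)$ has $2$-large splitting for all $s$ of height $h$, extends the stems away from $\alpha^*$ to decide $B_i\restriction h+1$ and $g_\epsilon\restriction h+1$, and then extends the stem at $\alpha^*$ through the $2$-large split to force $\eta_{\alpha^*}(m)\notin B_i(m)$ for one $m\le h$ --- a single-level contradiction. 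Your one-sentence parenthetical ``alternative'' is essentially this proof; your primary plan is a much heavier detour.

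The genuine gap is in that primary plan: the claim that $q\mapsto T_q$ is a projection and that $\eta_{\alpha^*}$ is $Q_{\eta_\epsilon}$-generic over $V_1$ is precisely the hard part, and the difficulty is not where you locate it (preserving largeness when pruning). To meet a dense set $D\in V_1$ of $Q_{\eta_\epsilon}$ you would prune $q(\alpha^*,\cdot)$ into a fat tree $T\in D$; but $T$ and $\eta_\epsilon$ live only in $V_1$, whereas conditions of $P$ (hence of the quotient) are elements of $V$, so the naive cut-down object need not be a condition at all. One would instead have to run a name-based fusion argument in $V$, deciding the $P\restriction C$-names for the dense sets node by node along $q(\epsilon)$ and the $C$-coordinates while pruning $q(\alpha^*,s)$ in an $s$-dependent, coherence-respecting way --- an argument comparable in weight to Lemma~\ref{lem2}, and far more than this lemma needs, since to refute $\forall$-covering a single escape level against the one capturing $B_\zeta$ suffices. (Two smaller points: you invoke Lemma~\ref{lem:complete} for $P\restriction C$ with arbitrary $\varepsilon$-closed $C$, while the paper proves completeness only for $\{\epsilon\}\cup I_\epsilon$ --- the same stem-adjustment argument works but must be said; and the genericity route would additionally need that the $Q_{\eta_\epsilon}$-filter is recovered from the branch $\eta_{\alpha^*}$.) As written, then, the core step of your proof is missing; replacing it by the direct density/escape argument you mention in passing yields the paper's proof.
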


\begin{proof}
  Assume the following towards a contradiction:
  $\al1\leq \lambda<\kappa_\epsilon$,
  $B_i$ ($i\in\lambda$) are $P$-names, and
  $p$ forces that $\{B_i:\, i\in\lambda\}$
  is a covering family of $(f_\epsilon,g_\epsilon)$-slaloms.

  For every $B_i$, find a maximal antichain $A_i$ of conditions
  that read $B_i$ continuously. Because of
  $\al2$-cc, $X=\bigcup_{i\in\lambda,a\in A_i}\dom(a)$
  has size $\lambda<\kappa_\epsilon$, so there is an
  $\alpha\in I_\epsilon\setminus X$. Find a $q\leq p$ and an
  $i\in \lambda$ 
  such that $q$ forces that $\eta_\alpha(n) \in B_i(n) $ for all $n$.
  Without loss of generality,
  $q$ is uniform and stronger than some $a\in A_i$, i.e.\ $q\restriction \dom(q)\setminus
  \{\alpha\}$ continuously reads $B_i$. (And $q\restriction \{\epsilon\}$
  continuously reads $\eta_\epsilon\restriction n$
  and therefore $g_\epsilon(n)$.)

  Pick some $h$ big enough such that 
  $q(\alpha,s)$ has $2$-large splitting for all $s\in q(\epsilon)\cap \omega^h$.
  Increase the stems of $q(\beta)$ for 
  $\beta\in \dom(q)\setminus \{\alpha\}$ to some height $h'>h$ to decide
  $g_\epsilon\restriction h+1$ as well as
  $B_i\restriction h+1$.
  So the resulting condition $r$ 
  decides for all $m\leq h$ the values of $B_i(m)$ and 
  $g_\epsilon(m)$.
  $B$ is the name of an $(f_\epsilon,g_\epsilon)$-slalom, and
  therefore
  $|B_i(m)|<g_\epsilon(m)$.
  Also, $r(\alpha,\eta_\epsilon\restriction h)$ 
  has a $2$-large splitting node at some $m\leq h$. 
  But that implies that there are more than $g_\epsilon(m)$ many possibilities
  for $\eta_\epsilon(m)$. So we can 
  extend the stem or $r$ at $\alpha$ and choose some 
  $\eta_\alpha(m)\notin B_i(m)$, a contradiction.
\end{proof}

\section{The complete subforcing $P_\epsilon$, $\kappa_\epsilon\geq \mycfa(f_\epsilon,g_\epsilon)$}

\begin{Def}
  $P_\epsilon\subseteq P$ consists of conditions 
  with domain in $\{\epsilon\}\cup I_\epsilon$.
\end{Def}

\begin{Lem}\label{lem:complete}
  $P_\epsilon$ is a complete subforcing of $P$,
  and also has continuous reading of names. In
  particular, $P_\epsilon$ forces $2^{\al0}=\kappa_\epsilon$.
\end{Lem}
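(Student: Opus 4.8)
The plan is to establish the three claims in order: complete embedding, continuous reading of names, and $2^{\al0}=\kappa_\epsilon$. The first is the substantial one; the other two are bookkeeping on top of the machinery already developed.

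For the complete embedding I would first dispose of the easy half: $P_\epsilon\subseteq P$ is order preserving, and incompatibility is absolute, since if $p,q\in P_\epsilon$ have a common extension $r\in P$ then $r\restriction(\{\epsilon\}\cup I_\epsilon)\in P_\epsilon$ lies below both. So it suffices to attach to every $p\in P$ a reduction $\hat p\in P_\epsilon$, i.e.\ a condition such that every $q\le_{P_\epsilon}\hat p$ is compatible (in $P$) with $p$; this also makes maximal antichains of $P_\epsilon$ stay maximal in $P$. When $\epsilon\in\dom(p)$ I would take $\hat p=p\restriction(\{\epsilon\}\cup I_\epsilon)$: given $q\le_{P_\epsilon}\hat p$, the condition that agrees with $q$ on $\dom(q)$ and with $p$ on $\dom(p)\setminus(\{\epsilon\}\cup I_\epsilon)$ lies below $p$ and $q$, the only point to verify being \ref{def:P}.\ref{item:disjointtruks}: the stems of the other $\mu$-coordinates of $p$ are untouched, and $\stem(q(\epsilon))\supseteq\stem(p(\epsilon))$ stays incompatible with them since extending a stem preserves incompatibility.

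The hard part is the case $\epsilon\notin\dom(p)$, because then $\stem(q(\epsilon))$ for an arbitrary $q\in P_\epsilon$ need bear no relation to the stems occurring in $p$, so a naive gluing can break \ref{def:P}.\ref{item:disjointtruks}. My plan here is to let the reduction pre-commit the $\epsilon$-coordinate to a node reserved \emph{inside} $p$: if $\dom(p)\neq\emptyset$, use $\varepsilon$-closure to fix some $\epsilon_0\in\dom(p)\cap\mu$, let $v=\stem(p(\epsilon_0))$ (a splitting node of $p(\epsilon_0)$), and let $\tilde p\le p$ be obtained from $p$ by replacing $p(\epsilon_0)$ with its perfect subtree above $v^\frown 0$ and restricting the functions $p(\alpha)$ ($\alpha\in I_{\epsilon_0}\cap\dom(p)$) accordingly; one checks that \ref{def:P}.\ref{item:defP2} survives, a witnessing $s'>s$ always being takeable above $v^\frown 0$. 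Then put $\hat p\in P_\epsilon$ with domain $\{\epsilon\}$ and $\hat p(\epsilon)=\{u\in 2^{<\omega}:u\comp v^\frown 1\}$ (if $\dom(p)=\emptyset$, $\hat p=1_{P_\epsilon}$ works trivially). Any $q\le_{P_\epsilon}\hat p$ then has $\stem(q(\epsilon))\supseteq v^\frown 1$, hence incompatible with $\stem(\tilde p(\epsilon_0))$ (which extends $v^\frown 0$) and with every other $\stem(p(\epsilon_1))$ (already incompatible with $v$), so $q\cup\tilde p$ is a condition below $q$ and $p$. This shows $\hat p$ is a reduction, whence $P_\epsilon\lessdot P$.

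For the remaining claims I would note that, as a forcing on its own, $P_\epsilon$ is of exactly the type of Definition~\ref{def:P} but with $\mu$ replaced by the singleton $\{\epsilon\}$, so that \ref{def:P}.\ref{item:disjointtruks} becomes vacuous; thus Lemmas~\ref{lem:finite} and \ref{lem:wrq} and the fusion/pure-decision argument in the proof of Lemma~\ref{lem:puredec} apply verbatim and give fusion, pure decision, continuous reading of names, properness and $\omega^\omega$-boundedness for $P_\epsilon$. Finally $P_\epsilon$ is proper and, being completely embedded in the $\al2$-cc forcing $P$, is itself $\al2$-cc, so it preserves cardinals; in $V$, CH together with $\kappa_\epsilon^{\al0}=\kappa_\epsilon$ gives $|P_\epsilon|=\kappa_\epsilon$, and continuous reading of names then bounds the number of reals of the extension by $\kappa_\epsilon$; conversely the $\kappa_\epsilon$ generics $\eta_\alpha$ ($\alpha\in I_\epsilon$) are forced pairwise distinct (given $q$ and $\alpha\neq\beta$ in $I_\epsilon\cap\dom(q)$, extend the $s$-trees of $q$ at $\alpha$ and $\beta$ so as to disagree), so $2^{\al0}=\kappa_\epsilon$ holds in $V^{P_\epsilon}$.
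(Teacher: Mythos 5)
Your proof is correct and follows essentially the same route as the paper: restriction to $\{\epsilon\}\cup I_\epsilon$ provides the reduction, gluing gives compatibility, and continuous reading of names together with $|P_\epsilon|=\kappa_\epsilon^{\al0}=\kappa_\epsilon$ and the pairwise distinct generics yields $2^{\al0}=\kappa_\epsilon$. The only (harmless) divergence is the case $\epsilon\notin\dom(p)$: the paper keeps the trivial reduction and repairs a possible stem clash afterwards by extending the stems of $q(\epsilon)$ and of the clashing $p(\epsilon')$ to incompatible nodes, whereas you pre-commit the reduction's $\epsilon$-stem to a node incompatible with all stems of a suitable strengthening of $p$ --- both resting on the same use of \ref{def:P}.\ref{item:disjointtruks}.
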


\begin{proof}
  Continuous reading is analogous to the case of $P$.
  To see that $P_\epsilon$ is a complete subforcing, it
  is enough to show that for all 
  $p\in P$ there is a reduction $p'\in P_\epsilon$
  (i.e.\ for all $q\leq p'$ in $P_\epsilon$, $q$ and $p$ are compatible in $P$).
  Set $p'=p\restriction (\{\epsilon\}\cup I_\epsilon)$, pick
  $q\leq p'$ in $P_\epsilon$, and set
  $r=q\cup p\restriction I\setminus (I_\epsilon\cup \{\epsilon\})$.
  If $\epsilon\in\dom(p)$, then
  $r$ is a condition in $P$ (and stronger than $q$, $p$). Otherwise, it could happen
  that 
  $\stem(q,\epsilon)$ is compatible with
  $\stem(p,\epsilon')$ for some $\epsilon'\in \mu$.
  We can assume without loss of generality that
  $\stem(q,\epsilon)\supseteq \stem(p,\epsilon')$.
  Increase the stems of both $q(\epsilon)$ and $p(\epsilon')$
  to be incompatible. Then for any $\epsilon''$, 
  $\stem(q,\epsilon)$ and $\stem(p,\epsilon'')$ are incompatible
  as well.
\end{proof}

To complete the proof of the main theorem, it remains to be shown:
\begin{Lem}\label{lem2}
  $P$ forces that the $(f_\epsilon,g_\epsilon)$-slaloms in 
  $V[G_{P_\epsilon}]$ form a cover,
  in particular that $\mycfa(f_\epsilon,g_\epsilon)\leq \kappa_\epsilon$.
\end{Lem}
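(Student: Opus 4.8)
The plan is to show that in $V[G]$, every $\nu\in\prod_{n}f_\epsilon(n)$ is covered by some $(f_\epsilon,g_\epsilon)$-slalom lying in $V[G_{P_\epsilon}]$. Since $P$ is $\omega^\omega$-bounding and has continuous reading of names, I would first argue that it suffices to cover the $\nu$ that are continuously read from some condition. More precisely: given a $P$-name $\nu$ for an element of $\prod_n f_\epsilon(n)$ and a condition $p$, pass to a finitary uniform $q\le p$ that continuously reads $\nu$ (and also reads $\eta_\epsilon$, hence $f_\epsilon,g_\epsilon$). The task is then to find, below $q$, a condition forcing that a specific $P_\epsilon$-name for a slalom catches $\nu$.

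The key idea is that the coordinates in $I\setminus(\{\epsilon\}\cup I_\epsilon)$, together with the part of $q$ on $\{\epsilon\}\cup I_\epsilon$, determine $\nu$ only ``up to a small set of possibilities'' at each level, and that smallness is measured by the $g_\epsilon$-big norm. Concretely, I would fix a height $h$ and look at $\pos(q\restriction(\{\epsilon\}\cup I_\epsilon),\le h)$: extending the stems on the remaining coordinates to a large height decides $\nu\restriction h$ as a function of the chosen element of this $P_\epsilon$-side position set. For a fixed branch $\eta_\epsilon\restriction n$ through $q(\epsilon)$, as the ``outside'' stems vary, $\nu(n)$ ranges over some set $B(n)\subseteq f_\epsilon(n)$; the slalom is $n\mapsto B(n)$. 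To bound $|B(n)|$ I would use the counting hypothesis on the growth rate — the number of relevant outside-coordinate configurations up to the height needed to decide $\nu(n)$ is at most something like $\fmax(n-1)^{n}$-many branches on $n$-many coordinates, which by the Assumption ($g_{n,l}>2\cdot f_{n,l-1}^{n\cdot\fmax(n-1)^n}$) is forced below $g_\epsilon(n)=g_{n,0}$. The crucial point is that the slalom depends only on $q\restriction(\{\epsilon\}\cup I_\epsilon)$ and on the name $\nu$ restricted appropriately, plus the ground model, so it is (a name in) $P_\epsilon$; and $P_\epsilon$ is a complete subforcing by Lemma~\ref{lem:complete}, so $B\in V[G_{P_\epsilon}]$.

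The main obstacle, and the part requiring care, is the bookkeeping that ties the height at which $\nu(n)$ gets decided to the number of outside configurations, so that the cardinality bound $|B(n)|<g_\epsilon(n)$ actually holds with the stated growth rates — this is exactly where the first inequality in the Assumption is ``extracted from the proof.'' One must set up a fusion argument: build $q=\lim q_i$ so that $\nu\restriction h_i$ is $(\le h_i)$-decided by $q_i$, and so that the domain of $q_i$ relevant to deciding $\nu(n)$ is controlled; the number of coordinates and the branching at each used coordinate must be bounded by the ground-model data $\fmax(n-1)$ available below level $n$. A secondary subtlety is that the covering slalom must be a genuine $P_\epsilon$-name: one checks that the map (position on the $P_\epsilon$ side, level $n$) $\mapsto$ (set of decided values of $\nu(n)$) is computed from $q$, $\nu$ and the generic $G_{P_\epsilon}$ alone, using that $P_\epsilon$ is complete in $P$ so that $G_{P_\epsilon}$ determines $q\in G$ compatibly. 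Once these are in place, genericity gives: for every $\nu$ and every $p$, some $q\le p$ forces $\nu(n)\in B(n)$ for all $n$ with $B\in V[G_{P_\epsilon}]$ a slalom, which is the claim.
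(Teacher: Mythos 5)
There is a genuine gap, and it sits exactly where you defer to ``bookkeeping'': continuous reading of names is not strong enough to make your counting work, and the naive count itself is wrong at the critical levels. With mere continuous reading, $\nu(n)$ is only decided by positions up to some height $h_i>n$, and the number of outside-coordinate configurations up to height $h_i$ involves branching factors up to $\fmax(h_i-1)$, which dwarfs $g_\epsilon(n)$; so ``configurations up to the height needed to decide $\nu(n)$'' cannot be bounded by $\fmax(n-1)^n$. The paper fixes this with a dedicated device, \emph{rapid reading}: a fusion producing unique, large splitting in which $\nu\restriction m^{\text{split}}_i$ is $(\le m^{\text{split}}_i)$-decided, and even $(\le m^{\text{split}}_i-1)$-decided when the split is at a non-Sacks coordinate. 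That last strengthening is not obtained by counting but by \emph{homogenization}: at each non-Sacks split one uses the $g_s$-bigness of the norm (Lemma~\ref{lem:bignorm}) to thin the successor set so that the value of $\nu$ below the split height does not depend on which successor is taken. Your sketch contains no analogue of unique splitting or of this bigness/thinning step, and without it the height-versus-count bookkeeping you hope for cannot be arranged.

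Second, even granting rapid reading, your per-level count fails when the level-$n$ split lies at a coordinate $\alpha\in I_{\epsilon'}$ with $\epsilon'\neq\epsilon$: there the successor set can have size up to $f_{n,l}$ for an index $l$ with $f_{n,l}>g_\epsilon(n)$, so no counting of configurations gets below $g_\epsilon(n)$. The paper resolves this (Case D of its proof) by invoking the requirement \ref{def:P}.(\ref{item:disjointtruks}) that distinct Sacks stems be incompatible, which yields the lexicographic dichotomy $(*)$: either the $\epsilon'$-branch lies below the $\epsilon$-branch at level $n$, in which case $f_s<g_\epsilon(n)$ and counting works, or it lies above, in which case one performs a second homogenization (thinning the successor set by at most $1$ in norm, using $\nu(n)<f_\epsilon(n)=f_{n,k}$ with $k<l$ and $f_{n,k}^{2\cdot\fmax(n-1)^n}<g_{n,l}$) so that only $2\cdot\fmax(n-1)^n$ possibilities survive. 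Your proposal never uses the disjoint-stems clause and treats all outside coordinates symmetrically, so it has no way to handle this case; also note $g_\epsilon(n)$ is $g_{\eta_\epsilon\restriction n}$, not $g_{n,0}$ (bounding by $\gmin(n)$ is fine, but the identification as written is incorrect). The overall frame — a $P_\epsilon$-name $B(n)$ collecting the possible values of $\nu(n)$, completeness of $P_\epsilon$ from Lemma~\ref{lem:complete} — matches the paper, but the two mechanisms above (rapid reading via bigness, and the lexicographic case split via incompatible stems) are the actual content of the lemma and are missing.
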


For the proof, we need more notation:

Let $q\in P$.
\begin{itemize}
  \item
    For $\epsilon\in\mu$, $n$ is a splitting level of $q(\epsilon)$ if there
    is some splitting node $s\in q(\epsilon)\cap \omega^n$.
    $n$ is a unique splitting 
    level if there is exactly one such $s$.
  \item 
    Let $\alpha\in I_\epsilon$. $n$ is a splitting level
    of $q(\alpha)$ if there is some $s\in q(\epsilon)\cap \omega^{n}$
    such that some $t\in q(\alpha,s)\cap \omega^n$ is a splitting
    node.
    $n$ is a unique splitting level
    of $q(\alpha)$ if there is exactly one such $s$, and if 
    moreover for this $s$ there is exactly one $t$ as well.
  \item
    $q$ has unique splitting below $h$ if for all $n<h$ there is at most one
    $\alpha\in I$ such that $n$ is splitting level of $q(\alpha)$,
    and in this case $n$ is a unique splitting level of $q(\alpha)$.

    $q$ has unique splitting if $q$ has unique splitting
    below all $h$.
  \item
    If $q$ has unique splitting below $h$, we enumerate (in increasing order) the splitting levels
    below $h$
    (for any $\alpha$) by $(m^{\text{split}}_i)_{i\in l}$
    and the corresponding $\alpha$ by $(\alpha^{\text{split}}_i)_{i\in l}$.
    If $q$ has unique splitting, we get the corresponding infinite 
    sequences.\footnote{In this case, each
      each $\alpha\in\dom(q)$ will appear infinitely often
      in the sequence $(\alpha^{\text{split}}_i)_{i\in \omega}$, to allow for sufficiently
      large splitting.}
  \item
    $q$ has unique, large splitting if it has unique splitting and if for
    $\alpha^{\text{split}}_i\notin \mu$, the splitting node $t$ of
    height $m^{\text{split}}_i$ is $i$-large.
  \item
    Let $\nu$ be a $P$-name for a sequence in $\prod_{n\in\omega} \fmax(n)$.
    $q$ rapidly reads $\nu$ below $h$ if:
    \begin{itemize}
      \item
        $q$ has unique, large splitting below $h$.
      \item If $\alpha\in I_\epsilon$, then all splits
        at $\alpha$ are higher than some split at $\epsilon$, i.e.:
        If $\alpha^\text{split}_i=\alpha$, then
        $\alpha^\text{split}_j=\epsilon$ for some $j<i$.
      \item
        $\nu\restriction m^\text{split}_i$  is
        $(\mathord\leq m^\text{split}_i)$-decided by $q$.
      \item
        If $\alpha^{\text{split}}_i\notin \mu$,
        then $\nu\restriction m^\text{split}_i$  is even 
        $(\mathord\leq m^\text{split}_i\mathord-1)$-decided.\footnote{And
            therefore $(\mathord\leq m^\text{split}_{i-1})$-decided, since
            every $\eta\in \pos(q,\mathord \leq m^\text{split}_{i-1})$
            extend uniquely to an
            $\eta'\in \pos(q,\mathord \leq m^\text{split}_{i}-1)$.}
    \end{itemize}
    $q$ rapidly reads $\nu$ if this is the case below all $h$.
\end{itemize}

If $q$ has unique splitting, then $q$ is finitary.

\begin{Lem}
  Assume that $p\in P$ and that $\nu$ is a $P$-name 
  for a sequence in $\prod_{n\in\omega} \fmax(n)$.
  Then there is a $q\leq p$ rapidly reading $\nu$.
\end{Lem}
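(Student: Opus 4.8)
The plan is to build $q$ by a fusion argument, where we successively arrange unique large splitting, the ordering condition on splits at $\alpha \in I_\epsilon$ versus splits at $\epsilon$, and the rapid reading of $\nu$. First I would pass to a finitary, uniform condition below $p$ using Lemma~\ref{lem:wrq} and the finitary-density lemma; this gives a starting point $p_0$. Then I would construct a decreasing sequence $p_0 \geq p_1 \geq \dots$ together with heights $h_0 < h_1 < \dots$ suitable for fusion in the sense of the fusion lemma in the proof of~\ref{lem:puredec}, but with the extra bookkeeping features built in at each step. At stage $i$, using some bookkeeping that eventually lists every $\alpha \in \dom(p_0)$ cofinally, I pick the next index $\alpha = \alpha^\text{split}_i$ at which I want to create a splitting level, and I want that split to be $i$-large when $\alpha \notin \mu$.

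The key maneuver at each stage is: take $p_i$, and first thin it so that \emph{only} the chosen coordinate $\alpha$ splits on the relevant block of levels while all other coordinates have their stems pushed up high enough that they do not split there — this is where uniqueness of splitting comes from, and it uses that the forcings are finite splitting so we can always extend a stem past a given height. Concretely, at coordinate $\epsilon' \in \mu$, $\epsilon' \neq \alpha$, extend the stem; at coordinate $\alpha' \in I_{\epsilon'}$, extend the stem of $p_i(\alpha', s)$ uniformly (using uniformity) so no $t$ splits below the target height. If $\alpha \in I_\epsilon$, I must also make sure that some split at $\epsilon$ has already occurred at a lower level — this I arrange by always scheduling a split at $\epsilon$ before the first scheduled split at any $\alpha \in I_\epsilon$ in the bookkeeping, and since splits at $\epsilon$ are "free" (no largeness required for $\epsilon \in \mu$), this is harmless. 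Having isolated a single splitting node $t$ at coordinate $\alpha$ of the desired height, I thin $p(\alpha, \cdot)$ so that $t$ is the unique splitting node at that level and it is $i$-large (using item~\ref{def:P}.\ref{item:defP2} and the seemingly-stronger variant noted after~\ref{def:P} to first drive it up to $i$-large, then thin away all other splitting below it).

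Then, for the rapid reading clause: after the split at level $m^\text{split}_i$ has been fixed, enumerate $\pos(p_i \wedge (\text{the split so far}), \mathord\leq m^\text{split}_i)$ — this is a finite set since $p_i$ is finitary — and apply pure decision (the glue-back-together argument in~\ref{lem:puredec}) to strengthen so that $\nu \restriction m^\text{split}_i$ is $(\mathord\leq m^\text{split}_i)$-decided, or $(\mathord\leq m^\text{split}_i - 1)$-decided in the case $\alpha^\text{split}_i \notin \mu$ — which is possible precisely because the split at $\alpha$ is at level $m^\text{split}_i$ but we can decide $\nu\restriction m^\text{split}_i$ using only the positions up to level $m^\text{split}_i - 1$, provided we arranged the decision \emph{before} introducing that split. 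So the order within a stage is: schedule, isolate and enlarge the split, then decide $\nu$ up to the appropriate level below it; push all other stems above; record $h_i$; set $u_i$. Finally take the fusion limit $q$; by construction $q$ has unique, large splitting, the ordering condition holds, and $\nu$ is rapidly read below every $h$, hence rapidly read.

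The main obstacle I expect is the interaction between item~\ref{def:P}.\ref{item:disjointtruks} (disjoint stems among the Sacks coordinates in $\mu$) and the need to repeatedly extend stems at $\mu$-coordinates to isolate splitting: extending a stem must not violate incompatibility with the other $\mu$-stems, and one must check the limit still satisfies~\ref{item:disjointtruks}. Since extending a stem only \emph{lengthens} it, incompatibility with already-incompatible stems is preserved, so this is routine but must be tracked; the genuinely delicate point is bookkeeping — ensuring every coordinate is visited cofinally often (so every $\alpha \notin \mu$ gets splits that are $i$-large for unboundedly large $i$, which is what "large splitting" in the strong sense requires), while simultaneously respecting that a split at $\alpha \in I_\epsilon$ must be preceded by a split at $\epsilon$. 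One schedules $\epsilon$ itself cofinally often too, interleaving so that between any two consecutive $\alpha$-splits (for $\alpha \in I_\epsilon$) there is room, but actually it suffices to put one $\epsilon$-split very early and then all later $\alpha$-splits automatically satisfy the clause.
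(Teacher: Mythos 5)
Your overall architecture (bookkeeping, isolating one split per block of levels by pushing all other stems up, scheduling $\epsilon$ before any $\alpha\in I_\epsilon$, taking a fusion limit) is the same as the paper's. The genuine gap is at the single point where the lemma has real content: the clause that for $\alpha^{\text{split}}_i\notin\mu$ the restriction $\nu\restriction m^{\text{split}}_i$ must be $(\mathord\leq m^{\text{split}}_i\mathord-1)$-decided, i.e.\ decided \emph{without} knowing which successor of the new splitting node is taken. You propose to get this by ``arranging the decision before introducing that split''. This does not work. The split lives at a coordinate $\alpha\in\dom(q)$, so different successors of the splitting node are different conditions and may force different values of $\nu\restriction m^{\text{split}}_i$; hence a condition that still carries the whole $i$-large successor set need not decide that value at all, and pure decision keeping $\pos(\cdot,\mathord\leq m^{\text{split}}_i\mathord-1)$ fixed only permits strengthening above that level, which either keeps the full successor set (and then cannot force a single value) or thins/extends past it (destroying the large split you just built). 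Reversing the order fails for a structural reason: you can certainly decide $\nu\restriction m$ below level $m$ first, but the large splitting node you then find inside the deciding condition sits at some higher, uncontrollable height $h$, and the definition of rapid reading now demands that $\nu\restriction h$ be decided below $h$; re-deciding pushes the available large splits higher still, and this regress never closes.

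What is missing is exactly the homogenization step of the paper, which is where the bigness of the norm (last clause of Lemma~\ref{lem:bignorm}) and the growth hypothesis $g_{n,l}>2\cdot f_{n,l-1}^{n\cdot \fmax(n-1)^n}$ enter. One first applies pure decision at the height $h$ of the new split, so $\nu\restriction h$ is $(\mathord\leq h)$-decided; then one regards this as a function from the successor set $A$ of the new splitting node into the set of tuples of values of $\nu\restriction h$, one entry for each of the at most $\fmax(h-1)^h$ positions below $h$, a set of size less than $\gmin(h)\leq g_s$; by bigness one thins $A$ to a homogeneous set, losing at most $1$ in norm (which is why the paper takes an $(i+2)$-large node in order to end with an $(i+1)$-large one). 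On the thinned condition the value of $\nu\restriction h$ no longer depends on the chosen successor, which is precisely $(\mathord\leq h\mathord-1)$-decidedness. Without this step the extra decidedness clause cannot be met, and the counting in the proof of Lemma~\ref{lem2} (the functions $G$ on $2\times\pos(q,\mathord\leq n)$, Cases A--D) has nothing to run on. The rest of your plan (stem-pushing for uniqueness, preservation of clause \ref{def:P}.\ref{item:disjointtruks} since stems only get longer, bookkeeping visiting every coordinate cofinally with $\epsilon$ scheduled before $I_\epsilon$) is fine and agrees with the paper.
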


\begin{proof}
  We use the following notion of unique extension:
  Fix $p\in P$ finitary, $m\in \omega$, and a splitting node $s$ (or 
  $(s,t)$) in $p$ of height $h>m$.\footnote{This means:
    Either $\epsilon\in\mu$ and $s\in p(\epsilon)$ is a splitting node, 
    or $\alpha\in I_\epsilon$, $s\in p(\epsilon)$ and
    $t\in p(\alpha,s)$ is a splitting node.}
  Then we can extend $p$ uniquely above $m$ up to
  $s$ (or $s,t$), i.e.\ there is a $r$ satisfying:
  \begin{itemize}
    \item $r\leq p$, $\dom(r)=\dom(p)$.
    \item $\pos(r,\mathord\leq m)=\pos(p,\mathord\leq m)$.
    \item If $m<n<h$, then $n$ is not a splitting level of $r$.
    \item $h$ is a unique splitting level of $r$.
    \item If $a\in \pos(p,\mathord\leq h)$ extends $s$ (or $s,t$),
      then $a\in \pos(r,\mathord\leq h)$.
  \end{itemize}
  In other words, we eliminate all splits between $m$ and $h$, and 
  at $h$ we leave only the split $s$ (or $t$) with all its successors.

  We use this fact to define an increasing sequence $(p_i)_{i\in\omega}$
  and show that the limit $q$ has the desired properties.

  Set $p_{-1}=p$ and $m^\text{split}_{-1}=-1$.
  Assume we already have 
  $p_i$ as well as $m^\text{split}_j$ and 
  $\alpha^\text{split}_j$ 
  for all $j\leq i$, such that $p_i$ rapidly reads $\nu$ below
  $m^\text{split}_i+1$.
  For the final limit,
  we will keep all elements of $\pos(p_i,\mathord\leq m^\text{split}_i+1)$.

  We use some bookkeeping to choose $\alpha\in \dom(p_i)$ and
  $s\in p_i(\varepsilon(\alpha))\cap \omega^{m^\text{split}_i+1}$.
  If $\alpha\in \mu$, we pick some splitting node $s'>s$ in $p_i(\alpha)$.
  Otherwise
  we again use the bookkeeping to choose $t\in p_i(\alpha,s)\cap \omega^{m^\text{split}_i+1}$,
  and pick some
  $s'>s$ in $p_i(\varepsilon(\alpha))$ and an $i+2$-big splitting
  node $t'>t$ 
  in $p_i(\alpha,s')$. 
  Let $h$
  be the height of the splitting node $s'$ (or $t'$).
  We extend $p_i$ uniquely above $m^\text{split}_i$
  to $s'$ (or $s',t'$). Call the result $r$.
  Set $m^\text{split}_{i+1}=h$.
  Then, using pure decision,
  we can find some $p'\leq r$ which is
  $(\mathord\leq h)$-deciding
  $\nu\restriction h$ so that
  $\pos(p',\mathord\leq h)\equiv \pos(r,\mathord\leq h)$
  and the stems of $p'$ outside of $\dom(r)$ are higher
  than $h$.

  If $\alpha\in\mu$, set $p_{i+1}=p'$. Otherwise,
  let $A$ be the set of successors of $t'$.
  There are 
  less than $\fmax(h-1)^h$
  many possibilities for $\nu\restriction h$,
  and at most $h$ many splitting nodes below $h$,
  each with at most $\fmax(h-1)$ many successors.
  This gives a function
  \[
     \fmax(h-1)^h\times A\to \fmax(h-1)^h
  \]
  or 
  \[
    A \to \fmax(h-1)^{h\cdot \fmax(h-1)^h}<\gmin(h).
  \]
  So we can use bigness to thin out $A$ to some homogeneous
  $B$ that has norm at least $i+1$. Call
  the result $p_{i+1}$. In this case.
  $p_{i+1}$ already $(\mathord\leq h\mathord-1)$-decides
  $\nu\restriction h$.

  Let $q$ be the limit of $(p_i)_{i\in\omega}$. We have 
  to show that $q\in P$. It is enough to require from the bookkeeping
  that the following is satisfied:
  \begin{itemize}
    \item For all $\epsilon\in\dom(q)\cap \mu$, and $s_0\in q(\epsilon)$,
      there is an $s>s_0$ such that the bookkeeping chooses $\epsilon,s$
      at some stage.
    \item For all $\alpha\in \dom(q)\cap I_\epsilon$, for all
      $s_0\in q(\epsilon)$, and for all $t_0\in q(\alpha,s_0)$,
      there are $s>s_0$ and $t>t_0$ such that $\alpha,s,t$ are
      chosen at some stage.
    \item  For all $\alpha\in \dom(q)\cap I_\epsilon$, 
      $\epsilon$ is chosen (for the first time) before 
      $\alpha$ is chosen.
  \end{itemize}
  (It is easy to find a bookkeeping meeting these requirements.)
  Then $q$ is indeed in $P$: Assume that $\alpha\in \dom(q)\cap I_\epsilon$,
  $s_0\in q(\epsilon)$, and $l\in\omega$.
  We have to show that $q(\alpha,s)$ 
  is $l$-large for for some $s>s_0$.
  First extend $s$ to some $s'$ of height at least $m^\text{split}_l$
  (defined from $q$). Enumerate the leaves in $q(\alpha,s')$ as
  $t^{0},t^1,\dots, t^{k-1}$.
  Increase $s'$ to $s'_0$ such that in $q(\alpha,s'_0)$  there
  is a splitting node above $t^0$. Repeat that for the other 
  $t^i$ and set $s=s'_{k-1}$.
  If $b$ is a branch through 
  $q(\alpha,s)$, then there has to be some split in $b$ above
  $m^\text{split}_l$, but each splitting node 
  in $q$ of this height is $l$-large.
\end{proof}

So we get: If $\alpha^\text{split}_{i+1}\notin \mu$, then $\nu\restriction
m^\text{split}_{i+1}$, and in particular $\nu(m^\text{split}_{i})$,
is $(\mathord \leq m^\text{split}_i)$-decided.
Otherwise, it is $(\mathord \leq m^\text{split}_i)$-decided 
only modulo the two possibilities left and right for the successor at the
split at height $m^\text{split}_{i+1}$ in the Sacks condition
$q(\alpha^\text{split}_{i+1})$.
So in both cases, and for all $n$, we can calculate $\nu(n)$ from
$2\times \pos(q,\mathord\leq n)$. We can write this as a function:
\[
  G: 2\times \pos(q,\mathord\leq n)\to \fmax(n).
\]

\begin{proof}[Proof of Lemma \ref{lem2}]
  Fix $p\in P$ and a $P$-name $\nu$ for a function in
  $\prod_{n\in\omega}f_\epsilon (n)$. We have to find $q\leq p$
  and a $P_\epsilon$-name $B$ of an $(f_\epsilon,g_\epsilon)$-slalom 
  such that $q$ forces $\nu(n)\in B(n)$ for all $n\in\omega$.

  Let $r\leq p$ rapidly read $\nu$. We can assume that $\epsilon\in\dom(r)$.
  We can also assume that the $i$-th splitting node is even $(i+1)$-large
  and not just $i$-large.\footnote{It is clear we can get this
    looking at the proof of rapid reading, or we can get first a ``standard''
    rapid reading $r$ and then just remove the very first split by enlarging
    the trunk.}
  We will define, by induction on $n$, 
  $B(n)$ as well as $q\leq r$ up to height $\mathord\leq n$.

  $q$ will be the result of thinning out some of the splitting nodes in $r$ (in
  the non-Sacks part), in a such way
  that the norm of the node will be decreased by at most 1.
  So $q$ will again have unique, large splitting, and $q$
  will be a condition in $P$.

  If we already constructed $q$ below $n$, and if there is no split at
  height $n$, we have no choice for $q$ at height $n$ but just take the unique
  extension given by $r$.  If there is a split, we may thin out the successor set
  (reducing the norm by at most $1$).  Of course, this way we will loose former
  splits at higher levels (which extended the successors we just left out). So the
  splitting levels of $q$ will be a proper subset of the splitting levels of $r$. 
  In the following,
  $m^\text{split}_i$ and $\alpha^\text{split}_i$ denote the splits of $q$.

  If $\epsilon'\neq \epsilon$,
  $\alpha\in\dom(r)\cap I_{\epsilon'}$, and $h$ is a splitting level
  of $r(\alpha)$, then there is some splitting level $h'<h$ of $r(\epsilon')$.
  Also, $\trunk(r,\epsilon)$ and $\trunk(r,\epsilon')$ are incompatible,
  i.e. they differ below $h$. By the way we construct $q$, we get the
  same for $q$:
  \begin{quote} $(*)$ If $\alpha\in I_{\epsilon'}$, $\epsilon'\neq \epsilon$, and if 
    $h$ is a splitting level of $q(\alpha)$, then
    either all $s\in q(\epsilon)\cap 2^h$ are lexicographically smaller 
    than all $t\in q(\epsilon')\cap 2^h$, or the other way round.
  \end{quote}

  We now define $q$ at height $n$ and $B(n)$:
  Assume that $i$ is maximal such that
  $m=m^\text{split}_i\leq n$.
  Set $\alpha=\alpha^\text{split}_i$.
  By rapid reading there is a function $G$ with domain
  $2\times \pos(r,\mathord\leq m)$ that calculates
  $\nu(n)$. 
  Let $A$ be the set of successors of the split of level $m$.
  $\pos(r,\mathord\leq m-1)$ has size at most $\fmax(m-1)^m$.
  So we can write $G$ as 
  \[
    G: 2\times \fmax(m-1)^m \times A \to f_\epsilon(n).
  \]

  {\bf Case A:} $n>m$.\\
    There are no splits on level $n$, so for $q$ at level $n$ we use the
    unique extensions given by $r$.\\
    The size of $A$ is at most $\fmax(m)$, so the
    domain of $G$ has at most size 
    \[ 
      2\cdot \fmax(m-1)^m\cdot \fmax(m)<\gmin(n),
    \]
    and therefore is smaller than $g_\epsilon(n)$. So we can put all
    possible values for $\nu(n)$ into $B(n)$.

  {\bf Case B:} $n=m$, $\alpha\in \{\epsilon\}\cup I_\epsilon$.\\
    $q$ at level $n$ contains all the successors of the split at level $n$.\\
    In the $P_\epsilon$-extension, we know which 
    successor we choose.\footnote{If any. Of course
    the filter could be incompatible with $s$ (or $s,t$).}
    Given this knowledge, the domain of $G$ is again smaller than 
    $\gmin(m)$, just as in Case~A.

  {\bf Case C:} $n=m$, $\alpha\in \mu\setminus\{\epsilon\}$.\\
    $q$ at level $n$ contains both successors of the split at level $n$.\\
    $|A|=2$, so there are again only
    \[
      2\cdot \fmax(n-1)^{n}\cdot 2<\gmin(n)
    \]
    many possible values for $\nu(n)$.

  {\bf Case D:} Otherwise
                $n=m$, $\alpha\in I_{\epsilon'}$, $\epsilon'\neq \epsilon$.\\
    So for an $s\in r(\epsilon')\cap \omega^n$ there
    is a splitting node $t\in r(\alpha,s)$ of height $n$
    with successor set $A$.
    As stated in $(*)$ above, 
    $s$ is (lexicographically) either smaller or larger
    than all the nodes in $r(\epsilon)\cap \omega^n$.

  {\bf Subcase D1:} $s$ is smaller.\\
    We keep all the successors of the split at level $n$.\\
    $|A|\leq f_s$, and
    $g_\epsilon(n)=g_{\eta_\epsilon\restriction n}$ 
    has to be some $g_{n,k}$ for 
    $k>s$ (in $[0,2^n-1]$).
    So we get 
    \[
      2\cdot \fmax(n-1)^{n}\cdot f_s<g_\epsilon(n)
    \]
    many possible values.

  {\bf Subcase D2:} $s$ is larger.\\
    Let $k$ be $s-1$ (in $[0,2^n-1]$). 
    So $\nu(n)$ is less than $f_{n,k}$.
    We can transform $G$ into a function
    \[
      F: A \to f_{n,k}^{2\cdot \fmax(n-1)^{n}}<g_{n,s}.
    \]
    So we can thin out $A$ to get an $F$-homogeneous set $B\subseteq A$,
    decreasing the norm by at most 1.
    $q$ at height $n$ contains only the successors in $B$.
    Modulo $q$, there remain only $2\cdot \fmax(n-1)^{n}$ many possibilities
    for $\nu(m)$.
\end{proof}

\bibliographystyle{plain}
\bibliography{listb,more}

\end{document}